\newtheorem{theorem}{Theorem}
\newtheorem{lemma}{Lemma:}
\renewcommand{\thefootnote}{\fnsymbol{footnote}}
\title{Qualitative Analysis and Optimal Control Strategy of an SIR Model with Saturated Incidence and Treatment}
\author[1]{Jayanta Kumar Ghosh }
\author[2]{Uttam  Ghosh \thanks{Corresponding author Email:uttam\_math@yahoo.co.in}}
\author[3]{M. H. A. Biswas}
\author[2]{Susmita Sarkar}
\affil[1]{Boalia Junior High School, Nadia, West Bengal, India.}
\affil[2]{Department of Applied Mathematics, University of Calcutta, Kolkata, India.}
\affil[3]{Mathematics Discipline, Khulna University, Khulna-9208, Bangladesh}
\begin{document}

\maketitle
\renewcommand{\thefootnote}{\arabic{footnote}}
\abstract{This paper deals with an SIR model with saturated incidence rate affected by inhibitory effect and saturated treatment function. Two control functions have been used, one for vaccinating the susceptible population and other for the treatment control of infected population. We have analysed the existence and stability of equilibrium points and investigated the transcritical and backward bifurcation. The stability analysis of non-hyperbolic equilibrium point has been performed by using Centre manifold theory. The Pontryagin's maximum principle has been used to characterize the optimal control whose numerical results show the positive impact of two controls mentioned above for controlling the disease. Efficiency analysis is also done to determine the best control strategy among vaccination and treatment.}

{\bf Mathematics Subject Classification:} 37N25.34C23.49J15.92D30

{\bf Keywords:} Inhibitory factors,  Non-hyperbolic equilibrium point, Centre manifold theory, Transcritical bifurcation, Backward bifurcation, Optimal control, Efficiency analysis.
\section{Introduction}
Mathematical modelling in epidemiology have become powerful and important tool to understand the infectious disease dynamics and to improve control of infection in the population. A good epidemic model is an intelligent model which is able to predict any possible outbreak of the disease and is effective in reducing the transmission of the disease. It is a simplest version of reality in Biology [1-3].\\
In mathematical epidemiology, the incidence rate as well as treatment rate plays a crucial role while analysing the transmission of infectious diseases. The researchers in this field consider different type of incidence rate depending on character of disease spreading. Firstly, the bi-linear incidence rate [4] $ \beta SI $, (where the parameter $ \beta $ is transmission rate of infection and the variables $ S,I $ are respectively the number of susceptible and infected population) is based on the law of mass action which is not realistic because, at the initial stage of spreading of disease, the number of infected population is low and so people do not take care at this stage but at the later stage when the number of infected population becomes large, people take more care to protect them from the infection of the disease. The saturated incidence rate  $ \frac{\beta SI}{1+\alpha I} $ was introduced by Anderson and May in 1978 [5], where $ \alpha $ is defined as inhibitory coefficient. Clearly, this incidence rate is an increasing function of $ S $ as well as $ I $ and by this incidence rate the total growth of infected population is less compared to the standard incidence. This type of infection sometimes named as `incidence rate with psychological effect' [6], because the effect of $ \alpha $ stems from epidemic control (taking appropriate preventive measures and awareness) and the rate of infection decreases as the inhibitory coefficient $ \alpha $ increases.\\
Again, we are aware of the fact that the treatment is an important method to control diseases. The treatment rate of infected individuals is considered to be either constant or proportional to the number of infected individuals. Wang and Ruan [7] introduced a constant treatment function $ T(I) $ in an SIR model, where 
$ T(I) =\begin{cases} r,\ I>0\\0, \ \ I=0 \end{cases}$.
Again, we know that there are limited treatment resources or limited capacity for treatment in every community. To include this type of limitations in treatment Zhang and Liu [8] introduced a new continuously differentiable treatment function $ T(I)=\frac{rI}{1+\alpha I} $ to characterize the saturation phenomenon of the limited medical resources. Here $ T(I) $ is an increasing function of $ I $ and $ \frac{r}{\alpha} $ is the maximal supply of medical resource per unit time.\\
On the other hand, optimal control theory is a powerful mathematical tool that is used extensively to control the spread of infectious diseases. It is often used in the control of the spread of most infectious diseases for which either vaccine or treatment is available. Some researchers considered only vaccination control to their models [9] and some of them  used only treatment control [9-10]. The author in [11] have used both the controls in their models. The purpose of considering both vaccination and treatment in finding optimal control in epidemiological models is to minimize the susceptible and infected individuals as well as the cost of implementing these two controls.\\
In this paper, we have considered an SIR model with saturated incidence rate $ \frac{\beta SI}{1+\alpha I} $ affected by the inhibitory effect $ \alpha $ and saturated treatment function. Both vaccination control $ u_{1} $ and treatment control $ u_{2} $ have been used to address the question of how to optimally combine the vaccination and treatment strategies for minimizing the susceptible and infected individuals as well as the cost of the implementation of the two interventions. Here, we have used the treatment function $ T(I,u_{2})=\frac{ru_{2}I}{1+bu_{2}I} $, which is clearly the increasing function of $ I $ and $ u_{2} $ and the maximal supply of medical resource is $ \frac{r}{b} $, where $ b $ is the delayed parameter of treatment because $ T $ decreases as $ b $ increases and $ r $ is the cure rate. Here, we have analysed the stability of equilibrium points using eigen analysis method. The stability analysis of non-hyperbolic equilibrium point has been carried out by using Center manifold theory. Exhibition of transcritical and backward bifurcation have been analysed in our work. It is important to mention here that our work is different from some of the other related works cited in this paper[10-11] because the stability or instability of endemic equilibrium point(s) is analysed by applying different techniques described in [12]. It should also be noted that in this paper, we shall deal with the qualitative analysis of the model as well as the optimal control of the disease. Numerical simulations and efficiency analysis are performed to understand the positive impact of controls and to determine the best strategy among vaccination and treatment.\\ Organization of the paper is as follows. We have formulated the model in Section 2 and  discussed the boundedness of the solutions, existence of equilibria and basic reproduction number $ R_{0} $ in Section 3. The Section 4 is devoted to the stability and bifurcation analysis about disease free equilibrium point and Section 5 is devoted to the backward bifurcation and stability analysis of endemic equilibrium points. Section 6 gives detailed description about the characterization of optimal control. The numerical simulations and efficiency analysis are given in Section 7 and final section gives the conclusions.
\section{Model Formulation}
\paragraph{\textnormal{Let the total population be divided into three classes, namely susceptible population $ S(t) $, infected population $ I(t) $ and recovered population $ R(t) $ at time $ t $. Here, we have considered an epidemic model in which the birth rate of susceptible class is constant, the incidence and treatment rate are of  saturated type, susceptible class is vaccinated, the normal and disease induced death are also taken into consideration. It is also assumed that some of the infected individuals who are physically strong enough can recover themselves without treatment. Incorporating all the assumptions the governing differential equations of the model can be written in the following form}}
\begin{equation}\label{eq:1}      
\begin{cases}
\frac{dS}{dt} &= A-\frac{\beta SI}{1+\alpha I}-dS-u_{1}S\\
\frac{dI}{dt} &= \frac{\beta SI}{1+\alpha I}-(d+\delta+\gamma)I-\frac{ru_{2}I}{1+bu_{2}I}\\
\frac{dR}{dt} &=\frac{ru_{2}I}{1+bu_{2}I}+\gamma I+u_{1}S-dR
\end{cases}
\end{equation}
with initial conditions $S(0) \geq 0$, $I(0) \geq 0$, $R(0) \geq 0$. Parameters used in the system (1) are non-negative and listed in Table 1.
\begin{center}
	\begin{tabular}{| m{1.5cm} | m{8cm}|}
		\hline
		%\hline
		\begin{center}
			Parameters
		\end{center} & \begin{center}
			Interpretations
		\end{center} \\
		\hline
		%\hline
		$ A $ & Recruitment rate of the population.\\
		\hline
		$ \beta $ & Transmission rate. \\
		\hline
		$ \alpha $ & The parameter that measures the inhibitory factors. \\
		\hline
	    $ d $ & The natural mortality rate of the populations.   \\
	    \hline
	    $ \delta $ & Disease induced death rate.\\
		\hline
		$ \gamma $ & The natural recovery rate of the infected individuals.  \\
		\hline
		$ r $ & Cure rate.\\
		\hline
		$ b $ & Delayed parameter of treatment.\\
		\hline
		$ u_{1} $ & The control variable, be the percentage of susceptible individuals being vaccinated per unit of time.\\
		\hline
		$ u_{2} $ & The treatment control parameter.\\
		\hline
		%\hline
		\end{tabular}
\end{center}
\begin{center}
Table 1. Model parameters and their descriptions
\end{center}
\paragraph{\textnormal{Since the exact solution of the non linear autonomous system (1) is impossible to find, so we are analysing the qualitative behaviour of the solutions in the neighbourhood of the equilibrium points.}}
\section{Boundedness of Solutions, Existence of the Equilibria and the Basic Reproduction Number}
In this section we shall discuss the boundedness of the solutions and existence of the equilibrium points of system (1) for fixed value of control parameters $ u_{1} $ and $ u_{2} $. We shall derive the basic reproduction number when the control parameters are taken as fixed.
\begin{lemma}
The region $D = \big\{(S,I,R)\in \mathbb{R}_{+}^{3} /S+I+R \leq \frac{A}{d} \big\}$ is a positively invariant set for the model (1).
\end{lemma}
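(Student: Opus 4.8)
The plan is to establish two facts whose conjunction is exactly forward invariance of $D$: first, that non-negative initial data produce solutions that stay in the closed octant $\mathbb{R}_+^3$ for all forward time; and second, that the total population $N = S + I + R$ can never exceed the threshold $A/d$ once it starts at or below it. I would prove positivity and the upper bound separately and then combine them.

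For the positivity part, I would inspect the vector field of \eqref{eq:1} on each coordinate face of $\mathbb{R}_+^3$. On the face $S=0$ the first equation gives $\dot S = A \ge 0$, so the flow cannot drive $S$ negative; on the face $R=0$ the third equation gives $\dot R = \frac{r u_2 I}{1+b u_2 I} + \gamma I + u_1 S \ge 0$ whenever $I,S \ge 0$, so $R$ is kept non-negative as well. The infected compartment is handled by factoring the second equation as $\dot I = I\,g(S,I)$ with $g(S,I) = \frac{\beta S}{1+\alpha I} - (d+\delta+\gamma) - \frac{r u_2}{1+b u_2 I}$; integrating the linear-in-$I$ form yields $I(t) = I(0)\exp\!\big(\int_0^t g\,ds\big) \ge 0$, so $I$ retains the sign of its initial value. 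Hence every trajectory starting in $\mathbb{R}_+^3$ remains there.

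For the boundedness part, I would add the three equations of \eqref{eq:1}. All of the interaction and transfer terms --- the incidence $\frac{\beta SI}{1+\alpha I}$, the vaccination flux $u_1 S$, the treatment flux $\frac{r u_2 I}{1+b u_2 I}$, and the natural recovery $\gamma I$ --- occur with opposite signs in two of the equations and cancel, leaving the balance law $\dot N = A - dN - \delta I$. Since $\delta \ge 0$ and $I \ge 0$ on $\mathbb{R}_+^3$, this produces the differential inequality $\dot N \le A - dN$.

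Finally I would close the argument with a standard comparison (Gronwall-type) estimate: integrating $\dot N \le A - dN$ gives $N(t) \le \frac{A}{d} + \big(N(0)-\frac{A}{d}\big)e^{-dt}$, so if $N(0) \le A/d$ then $N(t) \le A/d$ for all $t \ge 0$; equivalently, on the boundary face $N = A/d$ one has $\dot N \le 0$, so no trajectory crosses outward. Together with the positivity established above, this shows $D$ is positively invariant. I do not anticipate a genuine obstacle; the only step needing a little care is the positivity of $I$, where the saturated nonlinear terms should be absorbed into the exponential integrating factor rather than bounded crudely, and one should record that the argument uses $A \ge 0$ (for the $S$ face) and the non-negativity of all rate parameters, which are precisely the standing assumptions on the model.
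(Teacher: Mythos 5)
Your proposal is correct, and it shares the paper's central computation --- summing the three equations to obtain $\frac{dN}{dt}=A-dN-\delta I\leq A-dN$ for $N=S+I+R$ --- but it is genuinely more complete than the paper's own argument. The paper's proof consists solely of that differential inequality followed by integration and a $\limsup$ as $t\to\infty$, yielding $\limsup_{t\to\infty}N(t)\leq \frac{A}{d}$. Strictly speaking, that establishes only that the region $D$ is attracting (absorbing), not that it is positively invariant as the lemma asserts: invariance requires showing that a trajectory starting in $D$ never leaves $D$, and the limsup bound alone does not rule out an excursion above $\frac{A}{d}$ at finite times (it cannot happen, but one must say why). Your explicit comparison estimate $N(t)\leq \frac{A}{d}+\big(N(0)-\frac{A}{d}\big)e^{-dt}$ closes exactly this gap, since $N(0)\leq\frac{A}{d}$ then forces $N(t)\leq\frac{A}{d}$ for all $t\geq 0$. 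You also supply the positivity half of the claim --- checking the vector field on the faces $S=0$ and $R=0$ and writing $I(t)=I(0)\exp\big(\int_0^t g\,ds\big)$ with the saturated terms absorbed into the integrating factor --- which the paper omits entirely even though membership in $\mathbb{R}_{+}^{3}$ is part of the definition of $D$. In short: same key differential inequality, but your version proves the lemma as stated, whereas the paper's proof really establishes the weaker (though practically relevant) property that $D$ eventually absorbs all non-negative solutions.
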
   
\begin{proof}: Let $ N=S+I+R. $\\ So, $ \frac{dN}{dt}=A-dN-\delta I \leq A-dN $, integrating and taking $limsup$ as $t\to \infty $ we get  $\limsup\limits_{t\to \infty} N(t)\leq \frac{A}{d}$.  \\ Hence the lemma is proved.
\end{proof}    
\paragraph{\textnormal{The system (1) has always the disease free equilibrium point (DFE) $ A_{1}(S_{1},0,R_{1})=\bigg(\frac{A}{d+u_{1}},0,\frac{u_{1}A}{d(d+u_{1})}\bigg)$ at which the population remains in the absence of disease. Therefore, the model (1) has a threshold parameter $ R_{0} $, known as the basic reproduction number, which is defined as the number of secondary infection produced by a single infection in a completely susceptible population.   }}
\begin{lemma}
The basic reproduction number for the model (1) is $ R_{0}=\frac{\beta A}{(d+u_{1})(d+\delta+\gamma+ru_{2})}$.
\end{lemma}
\begin{proof}: Here is only one infected compartment, that is, the variable $ I $ and the disease free equilibrium point is $ A_{1} $. The basic reproduction number $ R_{0} $ is defined as the spectral radius of the next generation matrix $ FV^{-1} $ with small domain [13], where $ F=\left[ \Bigg(\frac{\beta S}{(1+\alpha I)^2}\Bigg)_{1\times1} \right] _{DFE} = \Bigg(\frac{\beta A}{d+u_{1}}\Bigg)_{1\times1}$ and $ V=\left[\Bigg(d+\delta+\gamma+\frac{ru_{2}}{(1+bu_{2} I)^2}\Bigg)_{1\times1}\right] _{DFE} = \Bigg(d+\delta+\gamma+ru_{2}\Bigg)_{1\times1}$. Thus, $ R_{0} $ of the model is $ \frac{\beta A}{(d+u_{1})(d+\delta+\gamma+ru_{2})}. $  \\Hence the lemma is proved.
\end{proof} 
The existence of the endemic equilibrium point(s) can be determined by the relation \\
$ S=\frac{ru_{2}(1+\alpha I)}{\beta (1+bu_{2}I)}+\frac{(d+\delta+\gamma)(1+\alpha I)}{\beta}=\frac{A(1+\alpha I)}{\beta I+(d+u_{1})(1+\alpha I)} $. Thus, the compartment $ I $ of the equilibrium point $ (S,I,R) $ must satisfy the equation
\begin{equation}\label{eq:2}   
H(I)=0, 
\end{equation}
where $ H(I)\equiv\frac{A}{\beta I+(d+u_{1})(1+\alpha I)}-\frac{ru_{2}}{\beta (1+bu_{2}I)}-\frac{d+\delta+\gamma}{\beta}. $\\
By simplifying the equation (2), we get 
\begin{equation}\label{eq:3}   
C_{1}I^2+C_{2}I+C_{3}=0, 
\end{equation}
where $ C_{1}=bu_{2}(d+\delta+\gamma)\big\{\beta +\alpha(d+u_{1})\big\}, $ \\ $ C_{2}= bu_{2}\big\{(d+u_{1})(d+\delta+\gamma)-\beta A\big\}+(d+\delta+\gamma+ru_{2})\big\{\beta +\alpha(d+u_{1})\big\},$\\ $C_{3}=(d+u_{1})(d+\delta+\gamma+ru_{2})(1-R_{0}).$\\ 
Here, the coefficient $ C_{1} $ is always positive and the sign of $ C_{3} $ depends only on the value of $ R_{0} .$ Thus, we have \\
\textbf{(1)} If $ R_{0}>1 $ , then only one endemic equilibrium point $ (S^{*},I^{*},R^{*}) $ exists.\\
\textbf{(2)} If $ C_{2}>0 $ and $ R_{0}<1 $ , then there is no endemic equilibrium point.\\
\textbf{(3)} If $ C_{2}<0,$ $ C_{2}^2-4C_{1}C_{3}>0 $ and $ R_{0}<1 $ , then two endemic equilibrium points $ (S_{1}^{*},I_{1}^{*},R_{1}^{*}) $ and $ (S_{2}^{*},I_{2}^{*},R_{2}^{*}) $ exist with $ I_{1}^{*}<I_{2}^{*} $.
\section{Stability and Bifurcation Analysis at DFE}
In this section we shall investigate the stability and transcritical bifurcation at the disease free equilibrium point(DFE) for fixed vaccination and treatment control. Here, the variational matrix corresponding to the system (\ref{eq:1}) is \\
\begin{equation}\label{eq:4}   
J(S,I,R)=\left( \begin{array}{ccc} -\frac{\beta I}{1+\alpha I}-d-u_{1} & -\frac{\beta S}{(1+\alpha I)^2} & 0 \\
 \frac{\beta I}{1+\alpha I} & \frac{\beta S}{(1+\alpha I)^2}-(d+\delta+\gamma)-\frac{ru_{2}}{(1+bu_{2} I)^2} & 0 \\ u_{1} &     \frac{ru_{2}}{(1+bu_{2} I)^2}+\gamma & -d \end{array} \right).
\end{equation}
\begin{theorem}
If $ R_{0}<1 $ then the disease free equilibrium point $ A_{1} $ is asymptotically stable and if $ R_{0}>1 $ then it is unstable.
\end{theorem}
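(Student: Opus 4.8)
The plan is to establish stability by linearization, i.e.\ by determining the signs of the real parts of the eigenvalues of the variational matrix (\ref{eq:4}) evaluated at the disease free equilibrium $A_1$. First I would substitute $I=0$ and $S=S_1=\frac{A}{d+u_1}$ into $J(S,I,R)$. This eliminates the terms $\frac{\beta I}{1+\alpha I}$ in the first two rows, reduces $\frac{\beta S}{(1+\alpha I)^2}$ to $\beta S_1=\frac{\beta A}{d+u_1}$, and reduces $\frac{ru_2}{(1+bu_2 I)^2}$ to $ru_2$. The resulting matrix $J(A_1)$ has third column equal to $(0,0,-d)^{\top}$, and the $(2,1)$ entry of the surviving upper-left block vanishes.

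Next I would exploit this block structure to read off the spectrum without solving a genuine cubic. Expanding $\det\!\bigl(J(A_1)-\lambda I\bigr)$ along the third column and then using the upper-triangular form of the remaining $2\times 2$ block, the characteristic polynomial factors completely and the three eigenvalues are
\[
\lambda_1=-d,\qquad \lambda_2=-(d+u_1),\qquad \lambda_3=\frac{\beta A}{d+u_1}-(d+\delta+\gamma+ru_2).
\]
Since every parameter is non-negative and $d>0$, the eigenvalues $\lambda_1$ and $\lambda_2$ are manifestly negative and hence play no role in destabilising $A_1$.

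The whole question therefore reduces to the sign of $\lambda_3$, and here the one observation that does any work is to factor $\lambda_3$ through the basic reproduction number of Lemma~2. Writing
\[
\lambda_3=(d+\delta+\gamma+ru_2)\!\left(\frac{\beta A}{(d+u_1)(d+\delta+\gamma+ru_2)}-1\right)=(d+\delta+\gamma+ru_2)(R_0-1),
\]
we get $\operatorname{sign}(\lambda_3)=\operatorname{sign}(R_0-1)$. I would then conclude directly: if $R_0<1$ then $\lambda_3<0$, all three eigenvalues have negative real part, and $A_1$ is (locally) asymptotically stable; if $R_0>1$ then $\lambda_3>0$, giving an eigenvalue with positive real part, so $A_1$ is unstable.

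This is routine linear stability analysis and I do not expect a real obstacle. The only mildly delicate point is recognizing the clean factorization of $\lambda_3$ as $(d+\delta+\gamma+ru_2)(R_0-1)$; this is precisely what makes the threshold $R_0=1$ emerge and what aligns the algebraic computation with the epidemiological meaning of $R_0$ established in Lemma~2.
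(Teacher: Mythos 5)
Your proposal is correct and follows exactly the same route as the paper: the paper's proof simply states that the eigenvalues of the variational matrix (4) at $A_{1}$ are $-d$, $-(d+u_{1})$ and $(d+\delta+\gamma+ru_{2})(R_{0}-1)$, and concludes from their signs. You have merely made explicit the block-triangular structure and the factorization through $R_{0}$ that the paper leaves implicit, so there is no substantive difference.
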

\begin{proof}: The characteristic roots of the variational matrix (4) at the disease free equilibrium point $ A_{1} $ are $-d,-(d+u_{1})$ and $(d+\delta+\gamma+ru_{2})(R_{0}-1)$. Therefore, $ A_{1} $ is asymptotically stable when $ R_{0}<1 $ and is unstable when $ R_{0}>1 $. \\ Hence the theorem is proved.
\end{proof}
\begin{theorem}
For $ R_{0}=1 $, $ A_{1} $ is asymptotically stable if $(d+\delta+\gamma+ru_{2})\big\{\beta+(d+u_{1})\alpha\big\}<(d+u_{1})rbu_{2}^2$ and is unstable if $(d+\delta+\gamma+ru_{2})\big\{\beta+(d+u_{1})\alpha\big\}>(d+u_{1})rbu_{2}^2$.
\end{theorem}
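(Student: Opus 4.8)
The plan is to use the observation, already available from the proof of Theorem 1, that at $R_0=1$ the Jacobian $J(A_1)$ has eigenvalues $-d$, $-(d+u_1)$ and a \emph{simple zero} eigenvalue. Hence $A_1$ is non-hyperbolic, linearization is inconclusive, and I would invoke the centre manifold theory of [12]. Writing $(x_1,x_2,x_3)=(S,I,R)$, I would first compute a right null vector $w$ and a left null vector $v$ of $J(A_1)$ for the zero eigenvalue. Using $S_1=A/(d+u_1)$ together with the threshold identity $\beta S_1=d+\delta+\gamma+ru_2$ (which is exactly $R_0=1$), one obtains $w=(w_1,1,w_3)^{T}$ with $w_1=-\beta S_1/(d+u_1)<0$ and $w_3=(u_1w_1+ru_2+\gamma)/d$, while the left vector collapses to $v=(0,1,0)$, because the zero third column of $J(A_1)$ forces $v_3=0$ and the first column then forces $v_1=0$; the normalisation $v\cdot w=1$ holds automatically.

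Next I would reduce the flow to the one-dimensional centre manifold tangent to $w$. Since $v_1=v_3=0$, only the second component $f_2$ of the vector field survives in the bifurcation coefficient
\[
a=\sum_{i,j} v_2\,w_i w_j\,\frac{\partial^2 f_2}{\partial x_i\,\partial x_j}\Big|_{A_1},
\]
so I only need $\partial^2 f_2/\partial x_1\partial x_2=\beta$ and $\partial^2 f_2/\partial x_2^2=-2\alpha\beta S_1+2rbu_2^{2}$ at the DFE. Substituting $w_1$ and $\beta S_1=d+\delta+\gamma+ru_2$ and clearing the positive factor $2/(d+u_1)$ gives
\[
a=\frac{2}{d+u_1}\Big[(d+u_1)rbu_2^{2}-(d+\delta+\gamma+ru_2)\{\beta+\alpha(d+u_1)\}\Big],
\]
which is, up to that positive multiple, precisely $-C_2$ evaluated at $R_0=1$. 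Thus the two inequalities in the statement are nothing but the two possible signs of $a$ (equivalently of the coefficient $C_2$ of the endemic polynomial (3)). As a bifurcation-direction check I would also record $b=\sum v_2 w_i\,\partial^2 f_2/\partial x_i\partial\beta=S_1>0$.

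Finally I would read off the stability from the reduced equation $\dot u=\tfrac12 a u^{2}+O(u^{3})$ on the centre manifold. Because the infected coordinate satisfies $I=u\,w_2+O(u^2)=u+O(u^2)$, the biologically relevant part of the manifold is the half-line $u\ge 0$, and there the sign of $a$ alone decides whether a small infected perturbation decays to $A_1$ or grows. This last step is where the genuine care is needed and is, I expect, the main obstacle rather than the differentiation: the centre-manifold dynamics is only quadratic, so the origin is semistable on the full manifold and one must argue stability within the feasible region $D$ (where $I\ge 0$) rather than in a full neighbourhood, and one must match the direction of the stated inequality to the sign of $a$ that actually produces attraction along $u\ge 0$. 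I would close by cross-checking this conclusion against the existence analysis of Section 3 (the sign of $C_2$ at $R_0=1$ controls whether a positive endemic root $-C_2/C_1$ appears), so that the stability verdict at the threshold is consistent with the forward/backward character of the bifurcation indicated by $a$ and $b$.
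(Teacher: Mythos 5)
Your reduction is in substance the same centre-manifold argument the paper uses, only implemented through the Castillo-Chavez--Song null-vector shortcut instead of the paper's explicit similarity transformation $P$ and power-series ansatz $I=h_{1}(S)$, $R=h_{2}(S)$. Your $v=(0,1,0)$, your $w$, and your Hessian entries are all correct, and your coefficient
$a=\frac{2}{d+u_{1}}\bigl[(d+u_{1})rbu_{2}^{2}-(d+\delta+\gamma+ru_{2})\{\beta+\alpha(d+u_{1})\}\bigr]$
agrees with the paper's $A_{11}=d(d+u_{1})\bigl[(d+\delta+\gamma+ru_{2})\{\beta+(d+u_{1})\alpha\}-(d+u_{1})rbu_{2}^{2}\bigr]$ up to the \emph{negative} multiple $-2/\{d(d+u_{1})^{2}\}$; the sign flip is exactly an orientation difference, since the paper's centre eigenvector (the first column of $P$) has $I$-component $-d(d+u_{1})^{2}<0$, whereas your $w$ has $w_{2}=+1$. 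Your identification of $a$ with $-C_{2}$ at $R_{0}=1$ is also correct.

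The genuine gap is that you stop at the one step that actually decides the theorem. You rightly observe that $\dot u=\tfrac{a}{2}u^{2}+O(u^{3})$ makes the origin semistable on the full centre manifold and that stability must be judged on the feasible side $u\ge 0$ (where $I\ge 0$), but you leave the matching of the stated inequalities to the sign of $a$ as a promissory note. Carrying it out with your orientation ($w_{2}=1$, so $I=u+O(u^{2})$): attraction along $u>0$ holds iff $a<0$, i.e.\ iff $(d+\delta+\gamma+ru_{2})\{\beta+(d+u_{1})\alpha\}>(d+u_{1})rbu_{2}^{2}$ --- the \emph{reverse} of the pairing asserted in the statement. Your own proposed consistency check exposes the same tension: by your identity $a\propto -C_{2}|_{R_{0}=1}$, the statement's ``stable'' inequality is precisely the backward-bifurcation condition of Theorem 5 (substitute $\beta A=(d+u_{1})(d+\delta+\gamma+ru_{2})$ to see the two conditions coincide), and under backward bifurcation a small infected perturbation at $R_{0}=1$ grows, so the DFE cannot attract from the feasible side there. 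The paper's own proof reads ``stable iff $A_{11}<0$'' directly off the quadratic coefficient without performing the orientation check on the $I\ge 0$ side --- exactly the step you flagged as the main obstacle --- so completing your argument honestly does not reproduce the statement as printed but swaps its two inequalities. As it stands, then, your proposal is not a proof: the decisive step is missing, and once supplied it forces you either to flip your orientation bookkeeping (which your computation of $w$ does not permit) or to conclude that the stability assignment in the statement should be interchanged.
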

\begin{proof}: For $ R_{0}=1 $, the eigen values of the variational matrix (4) at the equilibrium point $ A_{1}(S_{1},0,R_{1}) $ are $ 0,-(d+u_{1}),-d $. So, $ A_{1} $ is a non-hyperbolic equilibrium point and Centre manifold theory will be applied to determine its stability. \\
Putting $ S'=S-S_{1},I'=I,R'=R-R_{1} $ in the system (\ref{eq:1}) and using Taylor's expansion we get (omitting the `dash' sign) 
\begin{equation}\label{eq:5}      
\frac{dX}{dt} = BX+F(S,I,R),
\end{equation}
where $ B=\left( \begin{array}{ccc} -(d+u_{1}) & -\frac{\beta A}{d+u_{1}} & 0\\
0 & 0 & 0 \\ u_{1} & (ru_{2}+\gamma) & -d \end{array} \right),X=
\left( \begin{array}{c} S \\ I \\ R \end{array} \right), F(S,I,R)=\left( \begin{array}{c} -\beta SI+\alpha \beta S_{1}I^2 \\ \beta SI+(rbu_{2}^2-\alpha \beta S_{1})I^2 \\ -rbu_{2}^2I^2 \end{array} \right).$
\begin{center}
(Neglecting the terms of order $ \geq 3 $)
\end{center}
Now, we construct a matrix $ P=\left( \begin{array}{ccc} \beta Ad & 1 & 0\\
-d(d+u_{1})^2 & 0 & 0 \\ u_{1}\beta A-(ru_{2}+\gamma)(d+u_{1})^2 & -1 & 1 \end{array} \right)$ so that $ P^{-1}BP= diag \bigg(0,-(d+u_{1}),-d\bigg) $. By using the transformation $ X=PY $,where $ Y=
\left( \begin{array}{c} S' \\ I' \\ R' \end{array} \right)$, the system (5) can be transformed into the form (omitting the `dash' sign)\\
\begin{equation}\label{eq:6}      
\begin{cases}
\frac{dS}{dt} &= 0+g_{11}(S,I,R)\\
\frac{dI}{dt} &= -(d+u_{1})I+g_{22}(S,I,R)\\
\frac{dR}{dt} &= -dR+g_{33}(S,I,R)
\end{cases}
\end{equation}\\where $ g_{11}(S,I,R)=A_{11}S^2+B_{11}SI,g_{22}(S,I,R)=A_{22}S^2+B_{22}SI,g_{33}(S,I,R)=A_{33}S^2+B_{33}SI $, with $A_{11}=\big\{\beta ^2 A+(d+u_{1})\alpha \beta A-(d+u_{1})^2rbu_{2}^2\big\}d$.\\
By the Centre manifold theory [14], there exists a centre manifold of the system (\ref{eq:6}) which can be expressed by \\ $ W^{c}(0)=\big\{(S,I,R)/I=h_{1}(S),R=h_{2}(S) for S<\delta\big\} $, where $ \delta (>0) $ is some number and $ h_{1}(0)=h_{2}(0)=0,Dh_{1}(0)=Dh_{2}(0)=0 $. To compute the centre manifold $ W^{c}(0)$, we assume that $ I=h_{1}(S)=h_{11}S^2+h_{12}S^3+........... ,  R=h_{2}(S)=h_{21}S^2+h_{22}S^3+........... $ . So from the Local Centre manifold theorem we have the flow on the centre manifold $ W^{c}(0)$ defined by the differential equation \\
\begin{equation}\label{eq:7}      
\frac{dS}{dt} = A_{11}S^2+\frac{A_{11}B_{11}}{(d+u_{1})}S^3+\frac{A_{22}B_{11}(B_{22}-2A_{11})}{(d+u_{1})^2}S^4+...... .
\end{equation}\\
Now, by the condition $ R_{0}=1 $, we get after simplification that $A_{11}=d(d+u_{1})\Big[(d+\delta+\gamma+ru_{2})\big\{\beta+(d+u_{1})\alpha\big\}-(d+u_{1})rbu_{2}^2\Big]$. The system will be stable and unstable according as $ A_{11} < 0 $ and $ A_{11} > 0 $ respectively.
\\Hence the theorem is proved.
\end{proof}
\textbf{Note}: Other components $ B_{11}, A_{22}, B_{22}, A_{33} $ and $ B_{33} $ are not derived here as they are not in use.
\begin{theorem}
If $ R_{0}<1 $ and $ \alpha \geq bu_{2} $ then the disease free equilibrium point $ A_{1} $ is globally asymptotically stable.
\end{theorem}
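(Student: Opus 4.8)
The plan is to treat $V=I$ as a Lyapunov function for the infected compartment, using the hypothesis $\alpha\ge bu_{2}$ to tame the competition between the saturated incidence and saturated treatment terms, and $R_{0}<1$ to force $I\to0$; convergence of the remaining coordinates then follows.

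First I would sharpen Lemma 1 to a bound on $S$. Since $\frac{dS}{dt}=A-\frac{\beta SI}{1+\alpha I}-(d+u_{1})S\le A-(d+u_{1})S$, a standard comparison argument gives $\limsup_{t\to\infty}S(t)\le\frac{A}{d+u_{1}}=S_{1}$, so for every $\epsilon>0$ there is a time $T$ with $S(t)\le S_{1}+\epsilon$ for all $t\ge T$.

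The core step is the estimate on $I$. Writing $\frac{dI}{dt}=I\Big[\frac{\beta S}{1+\alpha I}-(d+\delta+\gamma)-\frac{ru_{2}}{1+bu_{2}I}\Big]$ and invoking $\alpha\ge bu_{2}$ --- which gives $1+bu_{2}I\le1+\alpha I$ and hence $\frac{ru_{2}}{1+bu_{2}I}\ge\frac{ru_{2}}{1+\alpha I}$ --- together with $S\le S_{1}+\epsilon$ for $t\ge T$, I would bound the bracket above by $\frac{\beta(S_{1}+\epsilon)-ru_{2}}{1+\alpha I}-(d+\delta+\gamma)$. Using the identity $\beta S_{1}=R_{0}(d+\delta+\gamma+ru_{2})$ and $\frac{1}{1+\alpha I}\le1$, a short case analysis on the sign of $\beta(S_{1}+\epsilon)-ru_{2}$ shows that, once $\epsilon$ is small enough, this expression is bounded above by a negative constant $-\eta$ with $\eta=\min\{d+\delta+\gamma,\,(1-R_{0})(d+\delta+\gamma+ru_{2})-\beta\epsilon\}>0$. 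Hence $\frac{dI}{dt}\le-\eta I$ on $[T,\infty)$ and $I(t)\to0$ exponentially.

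Finally I would propagate $I\to0$ into the other equations: since $\frac{\beta SI}{1+\alpha I}\to0$, the $S$-equation becomes asymptotically autonomous with limiting dynamics $\dot S=A-(d+u_{1})S$, forcing $S\to S_{1}$, and likewise $R\to R_{1}=\frac{u_{1}A}{d(d+u_{1})}$; the theory of asymptotically autonomous systems (or a direct squeeze by comparison) upgrades this to convergence of $(S,I,R)$ to $A_{1}$. Combined with the local asymptotic stability granted by Theorem 1 for $R_{0}<1$, this yields global asymptotic stability of $A_{1}$ on $D$. I expect the main obstacle to be this core estimate: merging the incidence and treatment terms over the common denominator $1+\alpha I$ is exactly what $\alpha\ge bu_{2}$ is needed for, and one must extract a negative bound uniform in $I$ rather than one that degrades as $I$ grows; the upgrade from $I\to0$ to full convergence also needs the asymptotically-autonomous machinery to be cited with care, though it is routine.
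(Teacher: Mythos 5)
Your proof is correct, but it follows a genuinely different route from the paper's. The paper works in the reduced $(S,I)$ plane and applies the Bendixson--Dulac criterion with the Dulac function $B(S,I)=\frac{1+bu_{2}I}{SI}$, computing
$\frac{\partial(BF)}{\partial S}+\frac{\partial(BG)}{\partial I}=-\frac{A(1+bu_{2}I)}{IS^{2}}-\frac{(d+\delta+\gamma)bu_{2}}{S}-\frac{\beta(\alpha-bu_{2})}{(1+\alpha I)^{2}}<0$
when $\alpha\ge bu_{2}$; this excludes periodic orbits, and global stability then follows from Poincar\'e--Bendixson together with boundedness (Lemma 1), local stability (Theorem 1), and the fact --- left implicit in the paper --- that $R_{0}<1$ and $\alpha\ge bu_{2}$ force $C_{2}>0$ in equation (3), so that no endemic equilibrium can coexist with the DFE. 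Your comparison argument replaces all of this planar machinery with the differential inequality $\frac{dI}{dt}\le-\eta I$: the hypothesis $\alpha\ge bu_{2}$ enters through $\frac{ru_{2}}{1+bu_{2}I}\ge\frac{ru_{2}}{1+\alpha I}$, exactly mirroring the term $-\frac{\beta(\alpha-bu_{2})}{(1+\alpha I)^{2}}$ in the Dulac divergence, while $R_{0}<1$ enters directly via $\beta S_{1}=R_{0}(d+\delta+\gamma+ru_{2})$; your case analysis does deliver a bound uniform in $I$, since when $\beta(S_{1}+\epsilon)>ru_{2}$ the supremum of your bracket over $I\ge 0$ is attained at $I=0$, so your $\eta$ is well defined and positive for $\epsilon<(1-R_{0})(d+\delta+\gamma+ru_{2})/\beta$. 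The trade-off: the paper's argument is computationally shorter but is tied to the two-dimensional reduction and silently relies on nonexistence of endemic equilibria (automatic here, since $\alpha\ge bu_{2}$ is incompatible with the backward-bifurcation condition of Theorem 5); yours is dimension-independent, yields an explicit exponential decay rate for $I$, and makes the role of each hypothesis transparent, at the modest cost of invoking the theory of asymptotically autonomous systems (or a direct squeeze) to upgrade $I\to 0$ to $S\to S_{1}$ and $R\to R_{1}$, which, as you note, should be cited with care but is routine.
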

\begin{proof}: We rewrite the system (1) in $ (S,I) $ plane as given below \\
\begin{equation}\label{eq:8}      
\begin{cases}
\frac{dS}{dt} &= A-\frac{\beta SI}{1+\alpha I}-dS-u_{1}S \equiv F(S,I)\\
\frac{dI}{dt} &= \frac{\beta SI}{1+\alpha I}-(d+\delta+\gamma)I-\frac{ru_{2}I}{1+bu_{2}I} \equiv G(S,I).
\end{cases}
\end{equation}\\
Now considering the Dulac function $B(S,I)=\frac{1+bu_{2}I}{SI}$, we get
\begin{equation*}
\frac{\partial (BF)}{\partial S}+\frac{\partial (BG)}{\partial I}=-\frac{A(1+bu_{2}I)}{IS^2}-\frac{(d+\delta+\gamma)bu_{2}}{S}-\frac{\beta (\alpha-bu_{2})}{(1+\alpha I)^2}.
\end{equation*}
Thus the DFE is globally asymptotically stable if $ \alpha \geq bu_{2} $ . Hence the theorem is proved.
\end{proof}
In other words DFE is globally asymptotically stable if the inhibitory coefficient exceeds a value that is the product of delayed parameter of treatment and the treatment control.
\begin{theorem}
If $ \beta A>(d+u_{1})(d+\delta+\gamma) $, then the system (1) experiences a transcritical bifurcation at $ A_{1} $ as $ u_{2} $ varies through the bifurcation value $ u_{2}^{0}=\frac{\beta A}{r(d+u_{1})}-\frac{d+\delta+\gamma}{r} $.
\end{theorem}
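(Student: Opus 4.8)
The plan is to apply Sotomayor's theorem for a transcritical bifurcation, treating $u_2$ as the bifurcation parameter and writing the right-hand side of (\ref{eq:1}) as a vector field $f(S,I,R;u_2)$ with components $f_1,f_2,f_3$. First I would check that the proposed value $u_2^{0}=\frac{\beta A}{r(d+u_1)}-\frac{d+\delta+\gamma}{r}$ is admissible and critical: the hypothesis $\beta A>(d+u_1)(d+\delta+\gamma)$ gives $u_2^{0}>0$, and substituting $u_2^{0}$ into the formula for $R_0$ from Lemma 2 yields $d+\delta+\gamma+ru_2^{0}=\frac{\beta A}{d+u_1}$, so that $R_0=1$ precisely at $u_2=u_2^{0}$. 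By Theorem 2, when $R_0=1$ the eigenvalues of $J(A_1)$ are $0,-(d+u_1),-d$; hence the zero eigenvalue is simple and $A_1$ is exactly the non-hyperbolic equilibrium to which Sotomayor's theorem applies.

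Next I would determine the right and left eigenvectors of $J(A_1)$ for the zero eigenvalue at $u_2=u_2^{0}$. Solving $J(A_1)v=0$ gives a right eigenvector $v=(v_1,1,v_3)^{T}$ with $v_1=-\frac{\beta A}{(d+u_1)^2}$ and $v_3$ read off from the third row. Solving $wJ(A_1)=0$ is even cleaner: the third column forces $w_3=0$, the first column then forces $w_1=0$, leaving the left eigenvector $w=(0,1,0)$. Because $w$ is supported only on the $I$-component, all three transcritical conditions collapse to quantities built from $f_2=\frac{\beta SI}{1+\alpha I}-(d+\delta+\gamma)I-\frac{ru_2 I}{1+bu_2 I}$ alone, which greatly shortens the bookkeeping.

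I would then verify the three conditions at $(A_1,u_2^{0})$. Differentiating $f$ in $u_2$ gives $\frac{\partial f_2}{\partial u_2}=-\frac{rI}{(1+bu_2 I)^2}$, which vanishes at $A_1$ since $I=0$; thus $w^{T}f_{u_2}=0$, which satisfies the first transcritical condition and simultaneously rules out a saddle-node. For the second condition, the only surviving entry of the $I$-row of $Df_{u_2}$ at the DFE is $\frac{\partial^2 f_2}{\partial I\,\partial u_2}=-r$, so $w^{T}\!\left[Df_{u_2}\,v\right]=-r\,v_2=-r\neq 0$. For the third condition I would expand $w^{T}D^2f(v,v)=2\beta v_1 v_2+\frac{\partial^2 f_2}{\partial I^2}v_2^{2}$, using that the only nonzero second derivatives of $f_2$ at the DFE are $\frac{\partial^2 f_2}{\partial S\,\partial I}=\beta$ and $\frac{\partial^2 f_2}{\partial I^2}=-\frac{2\alpha\beta A}{d+u_1}+2rbu_2^{2}$.

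The main obstacle is the nondegeneracy demanded by the third condition, which is where the delicate algebra lives. Inserting the eigenvector components and simplifying with the relation $R_0=1$, the quantity $w^{T}D^2f(v,v)$ reduces to $\frac{2}{d+u_1}\big[(d+u_1)rbu_2^{2}-(d+\delta+\gamma+ru_2)\{\beta+\alpha(d+u_1)\}\big]$, which is exactly a negative multiple of the coefficient $A_{11}$ from Theorem 2. Consequently the bifurcation is a genuine transcritical bifurcation precisely when $A_{11}\neq 0$, the same nondegeneracy that governed the stability of $A_1$ at $R_0=1$, and the accompanying exchange of stability is consistent with $R_0$ crossing $1$ as $u_2$ decreases through $u_2^{0}$. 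I would close by invoking Sotomayor's theorem to conclude that system (\ref{eq:1}) undergoes a transcritical bifurcation at $A_1$ as $u_2$ passes through $u_2^{0}$.
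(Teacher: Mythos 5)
Your proposal follows the paper's proof essentially verbatim: Sotomayor's theorem applied at $A_{1}$ with the bifurcation value $u_{2}^{0}$, the same left eigenvector $(0,1,0)$, the same right eigenvector up to the normalization $v_{2}=1$, and the same three conditions verified in the same way (the first vanishing because $I=0$ at the DFE, the second reducing to a nonzero multiple of $r$). The only difference is cosmetic and in your favor: you simplify the second-order condition via $R_{0}=1$ into an explicit negative multiple of the coefficient $A_{11}$ from Theorem 2 and honestly flag that the genuine nondegeneracy requirement is $A_{11}\neq 0$, whereas the paper leaves that expression unsimplified and merely asserts it is nonzero.
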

\begin{proof}
Let $ f(S,I,R;u_{2})=
\left( \begin{array}{c} A-\frac{\beta SI}{1+\alpha I}-dS-u_{1}S \\ \frac{\beta SI}{1+\alpha I}-(d+\delta+\gamma)I-\frac{ru_{2}I}{1+bu_{2} I} \\ \frac{ru_{2}I}{1+bu_{2} I}+\gamma I+u_{1}S-dR \end{array} \right) , u_{2}^{0}=\frac{\beta A}{r(d+u_{1})}-\frac{d+\delta+\gamma}{r} $. \\So, $ Df(A_{1},u_{2}^{0})=\left( \begin{array}{ccc} -(d+u_{1}) & -\frac{\beta A}{d+u_{1}} & 0\\
0 & 0 & 0 \\ u_{1} & \frac{\beta A}{d+u_{1}}-(d+\delta) & -d \end{array} \right) $. Clearly, $ f(A_{1},u_{2}^{0})=0 $ and $ Df(A_{1},u_{2}^{0})$ has a simple eigen value $ \lambda =0. $ Thus, we shall use Sotomayor theorem [14] to establish the existence of transcritical bifurcation. Now, a eigen vector of $ Df(A_{1},u_{2}^{0})$ corresponding to the eigen value  $ \lambda =0$ is $ V=\left( \begin{array}{c} 1 \\ -\frac{(d+u_{1})^2}{\beta A} \\ \frac{(d+\delta)(d+u_{1})^2}{\beta Ad}-1 \end{array} \right) $ and a eigen vector of $ \big(Df(A_{1},u_{2}^{0}\big))^{T}$ corresponding to the eigen value  $ \lambda =0$ is $ W=\left( \begin{array}{c} 0 \\ 1 \\ 0 \end{array} \right) $. Let $ f_{u_{2}} $ denote the vector of partial derivatives of the components of $ f $ with respect of $ u_{2} $. Thus $ f_{u_{2}}=\left( \begin{array}{c} 0 \\ -\frac{rI}{(1+bu_{2}I)^{2}} \\ \frac{rI}{(1+bu_{2}I)^{2}} \end{array} \right) $ and so $ f_{u_{2}}(A_{1},u_{2}^{0})=\left( \begin{array}{c} 0 \\ 0 \\ 0 \end{array} \right) $. 
\\ Therefore, $ W^{T}f_{u_{2}}(A_{1},u_{2}^{0})= \textbf{0} $,\\
$ W^{T}\big(Df_{u_{2}}(A_{1},u_{2}^{0})V\big)=\left( \begin{array}{c} \frac{r(d+u_{1})^{2}}{\beta A} \end{array} \right) \neq \textbf{0} $ and \\
$ W^{T}\big(D^{2}f(A_{1},u_{2}^{0})(V,V)\big)=2(d+u_{1})^{2}\left( \begin{array}{c} -\frac{1}{A}-\frac{\alpha (d+u_{1})}{\beta A}+\frac{b}{r}+\frac{b(d+\delta+\gamma)^{2}(d+u_{1})^{2}}{rA^{2}\beta ^2}-\frac{2b(d+\delta+\gamma)(d+u_{1})}{rA \beta} \end{array} \right) \neq \textbf{0} $.\\
%(Here \textbf{0} is null matrix of order $ 1 $).\\
Therefore, all the conditions for transcritical bifurcation in Sotomayor theorem are satisfied. Hence, the system (1) experiences a transcritical bifurcation at the equilibrium point $ A_{1} $ as the parameter $ u_{2} $ varies through the bifurcation value $ u_{2}=u_{2}^{0} $. Hence the theorem is proved.
\end{proof}
\section{Backward Bifurcation and Stability Analysis of Endemic Equilibria}
In this section, we shall analyse the stability and the bifurcation behaviour at endemic equilibrium point by assuming that two controls $ u_{1} $ and $ u_{2} $ are constant. We have already proved that DFE is stable if $ R_{0}<1 $ and is unstable if $ R_{0}>1 $. Here, we shall establish that the bifurcating endemic equilibrium exists for $ R_{0}<1 $, which implies that the backward bifurcation occurs. Now, we shall obtain the necessary and sufficient condition on model parameters for the existence of backward bifurcation.
\begin{theorem}
The system (1) has a backward bifurcation at $ R_{0}=1 $ if and only if $ (ru_{2}+d+\delta+\gamma)(ru_{2}+d+\delta+\gamma+\alpha A)<bru_{2}^{2}A $.
\end{theorem}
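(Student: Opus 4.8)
The plan is to characterize the backward bifurcation directly through the equilibrium equation (\ref{eq:3}), $C_{1}I^{2}+C_{2}I+C_{3}=0$, rather than running a fresh centre-manifold reduction. Recall from Section 3 that $C_{1}>0$ always, while $C_{3}=(d+u_{1})(d+\delta+\gamma+ru_{2})(1-R_{0})$ changes sign exactly at $R_{0}=1$. A backward bifurcation at $R_{0}=1$ means precisely that positive endemic equilibria persist for values of $R_{0}$ slightly below $1$; by the case analysis recorded after (\ref{eq:3}) (case (3)) this is governed entirely by the sign of $C_{2}$ at the threshold. So the whole theorem reduces to showing that $C_{2}|_{R_{0}=1}<0$ is equivalent to the stated inequality, and then tying ``$C_{2}<0$'' to the occurrence of a backward bifurcation.

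First I would evaluate $C_{2}$ at the threshold. Setting $R_{0}=1$ gives the identity $\beta A=(d+u_{1})(d+\delta+\gamma+ru_{2})$, whence $(d+u_{1})(d+\delta+\gamma)-\beta A=-(d+u_{1})ru_{2}$. Substituting this into the definition of $C_{2}$ collapses it to $C_{2}|_{R_{0}=1}=(d+\delta+\gamma+ru_{2})\{\beta+\alpha(d+u_{1})\}-(d+u_{1})bru_{2}^{2}$. Next I would eliminate $\beta$ via $\beta=(d+u_{1})(d+\delta+\gamma+ru_{2})/A$, factor out the positive quantity $(d+u_{1})$, cancel it, and multiply through by $A>0$; the inequality $C_{2}|_{R_{0}=1}<0$ then becomes exactly $(ru_{2}+d+\delta+\gamma)(ru_{2}+d+\delta+\gamma+\alpha A)<bru_{2}^{2}A$, the stated condition.

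To close the ``if and only if'' I would argue both directions near $R_{0}=1$ using continuity of the coefficients in $R_{0}$. If the stated inequality holds then $C_{2}<0$ at $R_{0}=1$; for $R_{0}$ slightly below $1$ we have $C_{3}>0$ small, the discriminant $C_{2}^{2}-4C_{1}C_{3}$ is positive (it equals $C_{2}^{2}>0$ at $R_{0}=1$ and varies continuously), and since the root-sum $-C_{2}/C_{1}$ and root-product $C_{3}/C_{1}$ are both positive, (\ref{eq:3}) has two positive roots, i.e. two endemic equilibria coexisting with the locally stable DFE — a backward bifurcation, matching case (3). Conversely, if a backward bifurcation occurs, endemic equilibria must exist for $R_{0}<1$ arbitrarily close to $1$; positivity of the roots of (\ref{eq:3}) with $C_{3}>0$ forces the root-sum $-C_{2}/C_{1}$ to be positive, so $C_{2}<0$ at $R_{0}=1$, which is the stated inequality.

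The genuinely technical step is the algebraic reduction of $C_{2}|_{R_{0}=1}<0$ to the displayed symmetric form: one must apply the $R_{0}=1$ relation twice — once to simplify $(d+u_{1})(d+\delta+\gamma)-\beta A$ and once to eliminate $\beta$ — while keeping the common factor $(d+u_{1})$ visible so it can be cancelled cleanly. As a consistency check I would observe that the same expression controls the centre-manifold coefficient of Theorem 2: indeed $A_{11}=d(d+u_{1})\big[(d+\delta+\gamma+ru_{2})\{\beta+(d+u_{1})\alpha\}-(d+u_{1})rbu_{2}^{2}\big]=d(d+u_{1})\,C_{2}|_{R_{0}=1}$, so the backward-bifurcation criterion derived here coincides with the local asymptotic stability of $A_{1}$ at $R_{0}=1$ recorded earlier, which is exactly the bistability one expects to accompany a backward bifurcation.
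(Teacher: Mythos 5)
Your proposal is correct, and in substance it is the same argument as the paper's: both proofs work entirely from the quadratic (3) and both hinge on the sign of one and the same quantity at threshold. The paper substitutes $\beta=R_{0}(d+u_{1})(d+\delta+\gamma+ru_{2})/A$ into the coefficients of (3), implicitly differentiates, and obtains $\left[\partial I/\partial R_{0}\right]_{R_{0}=1,I=0}=\dfrac{A(d+\delta+\gamma+ru_{2})}{(d+\delta+\gamma+ru_{2})(d+\delta+\gamma+ru_{2}+\alpha A)-bru_{2}^{2}A}$, declaring backward bifurcation to be negativity of this slope; the denominator is exactly $\frac{A}{d+u_{1}}\,C_{2}\big|_{R_{0}=1}$, so your reduction $C_{2}\big|_{R_{0}=1}=(d+\delta+\gamma+ru_{2})\{\beta+\alpha(d+u_{1})\}-bru_{2}^{2}(d+u_{1})$ (which checks out, as does your identity $A_{11}=d(d+u_{1})\,C_{2}\big|_{R_{0}=1}$ tying the criterion to Theorem 2) analyses the same expression. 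Where you genuinely differ is the closing step: the paper stops at the slope criterion, leaving the existence of the two subthreshold equilibria implicit in its case (3), whereas you prove it by Vieta-type root counting ($C_{1}>0$, $C_{3}>0$ small, root-sum $-C_{2}/C_{1}>0$, discriminant persisting by continuity from $C_{2}^{2}>0$), together with an explicit converse. Your route buys a self-contained two-directional ``iff'' without invoking the slope definition of backward bifurcation; the paper's derivative buys a quantitative formula for the direction of the bifurcating branch. One shared boundary quibble, not a gap: in your converse, continuity only forces $C_{2}\big|_{R_{0}=1}\leq 0$, so the degenerate equality case $(ru_{2}+d+\delta+\gamma)(ru_{2}+d+\delta+\gamma+\alpha A)=bru_{2}^{2}A$ is glossed over --- exactly as it is in the paper, where the slope formula is likewise undefined at equality.
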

\begin{proof}: In Section 3, we have seen that the infected component $ I $ of endemic equilibrium points are the roots of the equation (3). Again, from Lemma 2 we can express $ \beta $ as $ \frac{R_{0}(d+u_{1})(d+\delta+\gamma+ru_{2})}{A}. $ Now, we substitute $ \beta $ in the coefficients of equation (3) and rewrite equation (3) as 
\begin{equation*}
C_{1}I^2+C_{2}I+C_{3}=0 ,
\end{equation*}
where $ C_{1}=\frac{bu_{2}(d+\delta+\gamma)(d+u_{1})\lbrace R_{0} (d+\delta+\gamma+ru_{2})+\alpha A\rbrace}{A}, C_{2}= bu_{2}(d+u_{1})\big\{(d+\delta+\gamma)-R_{0}(d+\delta+\gamma+ru_{2})\big\}+\frac{(d+\delta+\gamma+ru_{2})(d+u_{1})\lbrace R_{0} (d+\delta+\gamma+ru_{2})+\alpha A\rbrace}{A} $ and  $C_{3}=(d+u_{1})(d+\delta+\gamma+ru_{2})(1-R_{0}).$ \\
To obtain a necessary and sufficient condition on the model parameters such that backward bifurcation occurs we have to compute the value of $\left[ \frac{\partial I}{\partial R_{0}} \right]_{R_{0}=1,I=0} $. Now, differentiating the equation (3) implicitly with respect to $R_0$ we obtain
\begin{equation*}
\left[\frac{\partial I}{\partial R_{0}}\right] _{R_{0}=1,I=0}= \frac{A(d+\delta+\gamma+ru_{2})}{(d+\delta+\gamma+ru_{2})(d+\delta+\gamma+ru_{2}+\alpha A)-bru_{2}^{2}A} .
\end{equation*}
The system (1) has a backward bifurcation at $ R_{0}=1 $ if and only if the value of the slope $\left[ \frac{\partial I}{\partial R_{0}} \right]_{R_{0}=1,I=0} $ of the curve $ I=I(R_{0}) $ is less than zero. Hence we obtain the necessary and sufficient condition for backward bifurcation in the form $ (ru_{2}+d+\delta+\gamma)(ru_{2}+d+\delta+\gamma+\alpha A)<bru_{2}^{2}A .$\\Hence the theorem is proved.
\end{proof}
\begin{figure}[H]
 \centering
  {\includegraphics[width=3.5 in]{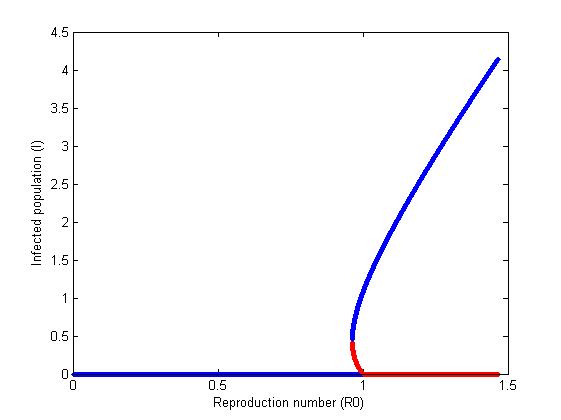}}
 \caption{Backward bifurcation curve for the parametric values $ A=11,\alpha=0.5,d=0.000039,\gamma=0.08,\delta=0.02,r=0.4,b=2.21,u_{1}=0.5,u_{2}=0.5 $.} 
\end{figure}
So, there is a real number $ R_{0}^{*}<1 $ for which two endemic equilibria exist for  $  R_{0}^{*}<R_{0}<1 $ if the condition in Theorem 5 holds. Now, we shall focus on the stability analysis of the endemic equilibrium point(s) for different values of $ R_{0} $ and we shall prove in the following theorem that the locally asymptotically stable DFE co-exists with a locally asymptotically stable endemic equilibrium point when $ R_{0} < 1 $ .
\begin{theorem}
If $ R_{0}>1 $ and $ \beta \geq max\lbrace rbu_{2}^{2}, r \alpha u_{2} \rbrace $, then the system (1) has a unique endemic equilibrium point $ (S^{*},I^{*},R^{*}) $ that is locally asymptotically stable. On the other hand if $ R_{0}^{*}<R_{0}<1 $ and $ (ru_{2}+d+\delta+\gamma)(ru_{2}+d+\delta+\gamma+\alpha A)<bru_{2}^{2}A $ , then the system (1) has two endemic equilibrium points. The one with the smaller number of infecteds, $ (S_{1}^{*},I_{1}^{*},R_{1}^{*}) $, is unstable, while the other , with a higher number of infecteds, $ (S_{2}^{*},I_{2}^{*},R_{2}^{*}) $, is locally asymptotically stable if $ \beta \geq max\lbrace rbu_{2}^{2}, r \alpha u_{2} \rbrace $.
\end{theorem}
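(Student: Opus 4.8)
The plan is to reduce the three–dimensional stability problem to a planar one and then read off the two Routh--Hurwitz conditions for a $2\times2$ matrix. Since the third column of the variational matrix (4) is $(0,0,-d)^{T}$, the characteristic polynomial factors as $(\lambda+d)$ times that of the upper-left $2\times2$ block $J_{2}$, which is the Jacobian of the $(S,I)$-subsystem (8). Hence one eigenvalue is always $-d<0$, and an endemic equilibrium is locally asymptotically stable precisely when $\mathrm{tr}\,J_{2}<0$ and $\det J_{2}>0$, while $\det J_{2}<0$ forces a saddle and hence instability. So the whole theorem splits into a determinant computation, which distinguishes the two branches, and a trace computation, which uses the hypothesis on $\beta$.

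First I would pin down the sign of $\det J_{2}$ at an endemic point $I^{*}$ by linking it to the slope of the scalar function that defines the equilibria. Solving $G(S,I)=0$ for $S=\phi(I)$ and substituting gives $\Phi(I):=F(\phi(I),I)$, whose zeros are exactly the endemic values of $I$. Using the equilibrium relation $\beta\phi/(1+\alpha I)=(d+\delta+\gamma)+ru_{2}/(1+bu_{2}I)$ one finds the clean identity $\Phi(I)=D_{1}(I)\,H(I)$, where $D_{1}(I)=\beta I+(d+u_{1})(1+\alpha I)>0$ and $H$ is the function from (2). Differentiating the composite and using $\phi'=-G_{I}/G_{S}$ with $G_{S}=\beta I/(1+\alpha I)>0$ yields $\mathrm{sign}(\det J_{2})=-\mathrm{sign}(\Phi'(I^{*}))=\mathrm{sign}(g'(I^{*}))$, where $g(I)=C_{1}I^{2}+C_{2}I+C_{3}$ is the quadratic (3) with $C_{1}>0$. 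This is the key step: it converts a stability sign into the elementary slope of an upward parabola. When $R_{0}>1$ we have $C_{3}<0$, so $g$ has a single positive root, which is its larger root; there $g'>0$, hence $\det J_{2}>0$. When $R_{0}^{*}<R_{0}<1$ and the inequality of Theorem 5 holds, $g$ has two positive roots $I_{1}^{*}<I_{2}^{*}$ with $g'(I_{1}^{*})<0<g'(I_{2}^{*})$; thus $\det J_{2}<0$ at $I_{1}^{*}$ (a saddle, so $(S_{1}^{*},I_{1}^{*},R_{1}^{*})$ is unstable) and $\det J_{2}>0$ at $I_{2}^{*}$.

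It remains to show $\mathrm{tr}\,J_{2}<0$ at the equilibria where $\det J_{2}>0$. Substituting the equilibrium relation into the trace I would reduce it to
\[
\mathrm{tr}\,J_{2}=-(d+u_{1})+\frac{I}{1+\alpha I}\left[-\beta-(d+\delta+\gamma)\alpha+\frac{ru_{2}(bu_{2}-\alpha)}{(1+bu_{2}I)^{2}}\right],
\]
in which every term is manifestly negative except possibly $ru_{2}(bu_{2}-\alpha)/(1+bu_{2}I)^{2}$. Bounding this term by $ru_{2}\,bu_{2}=rbu_{2}^{2}$, the hypothesis $\beta\ge\max\{rbu_{2}^{2},r\alpha u_{2}\}$ makes the bracket nonpositive, so $\mathrm{tr}\,J_{2}<0$. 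Combining with $\det J_{2}>0$ gives local asymptotic stability of the unique endemic point when $R_{0}>1$, and of $(S_{2}^{*},I_{2}^{*},R_{2}^{*})$ in the bistable range, while $(S_{1}^{*},I_{1}^{*},R_{1}^{*})$ is unstable with no condition on $\beta$ needed.

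The main obstacle I anticipate is purely computational: verifying the identity $\Phi=D_{1}H$ and the trace reduction requires carefully carrying the saturation terms $1/(1+\alpha I)$ and $1/(1+bu_{2}I)^{2}$ through, and one must check that the bound used for the trace really is delivered by $\beta\ge\max\{rbu_{2}^{2},r\alpha u_{2}\}$ rather than a slightly different combination. Indeed $\beta\ge rbu_{2}^{2}$ alone already controls the lone positive term, so the stated maximum is a safe sufficient condition. Everything else---the decoupling of the $R$-equation, the Routh--Hurwitz criterion, and the parabola-slope bookkeeping---is routine once the identity $\mathrm{sign}(\det J_{2})=\mathrm{sign}(g'(I^{*}))$ is in hand.
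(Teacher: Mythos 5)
Your proposal is correct and follows essentially the same route as the paper: one eigenvalue $-d$ is factored out, Routh--Hurwitz is applied to the remaining $2\times 2$ block, the determinant's sign is tied to the slope of the scalar equilibrium function (your identity $\mathrm{sign}(\det J_{2})=\mathrm{sign}(g'(I^{*}))$ is the paper's relation $G(0)=-\frac{\beta A I}{S}H'(I)$ in disguise, since $g(I)=-\beta\,D_{1}(I)(1+bu_{2}I)H(I)$ with $D_{1}>0$), and the trace is made negative using the hypothesis on $\beta$. Your trace reduction is in fact marginally sharper than the paper's bounding (which splits $\frac{rbu_{2}^{2}I}{(1+bu_{2}I)^{2}}\le\frac{rbu_{2}^{2}I}{1+bu_{2}I}$ and hence invokes both $\beta\ge rbu_{2}^{2}$ and $\beta\ge r\alpha u_{2}$), since your form of $\mathrm{tr}\,J_{2}$ shows $\beta\ge rbu_{2}^{2}$ alone suffices, but this only strengthens the same sufficient condition and does not change the argument.
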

\begin{proof}
We have the following characteristic equation of the variational matrix (4) at the endemic equilibrium point $ (S,I,R) $.
\[
\begin{vmatrix} -\frac{\beta I}{1+\alpha I}-d-u_{1}-\lambda & -\frac{\beta S}{(1+\alpha I)^2} & 0 \\
 \frac{\beta I}{1+\alpha I} & \frac{\beta S}{(1+\alpha I)^2}-(d+\delta+\gamma)-\frac{ru_{2}}{(1+bu_{2} I)^2}-\lambda & 0 \\ u_{1} &     \frac{ru_{2}}{(1+bu_{2} I)^2}+\gamma & -d-\lambda \end{vmatrix}=0,
\] \\ or equivalently,
\[
(\lambda+d)\begin{vmatrix} -\frac{\beta I}{1+\alpha I}-d-u_{1}-\lambda & -\frac{\beta S}{(1+\alpha I)^2} \\
 \frac{\beta I}{1+\alpha I} & \frac{\beta S}{(1+\alpha I)^2}-(d+\delta+\gamma)-\frac{ru_{2}}{(1+bu_{2} I)^2}-\lambda \end{vmatrix}=0.
\] \\
So, one of the three eigen values of the variational matrix is $ -d. $ The remaining eigen values are the solutions of the equation $ G(\lambda)=0, $ \\where
\[
G(\lambda) \equiv \begin{vmatrix} -\frac{\beta I}{1+\alpha I}-d-u_{1}-\lambda & -\frac{\beta S}{(1+\alpha I)^2} \\
 \frac{\beta I}{1+\alpha I} & \frac{\beta S}{(1+\alpha I)^2}-(d+\delta+\gamma)-\frac{ru_{2}}{(1+bu_{2} I)^2}-\lambda \end{vmatrix}.
\]
So, 
\[
G(0)=\begin{vmatrix} -\frac{\beta I}{1+\alpha I}-d-u_{1} & -\frac{\beta S}{(1+\alpha I)^2} \\
 \frac{\beta I}{1+\alpha I} & \frac{\beta S}{(1+\alpha I)^2}-(d+\delta+\gamma)-\frac{ru_{2}}{(1+bu_{2} I)^2} \end{vmatrix}.
\]
Again, we know from (2) that the component $ I $ of the endemic equilibrium point(s) are the solutions of the equation $ H(I)=0 $ and $ H(0)=(\frac{d+\delta+\gamma+ru_{2}}{\beta})(R_{0}-1) $. Thus, $ R_{0}>1 $ if and only if $ H(0)>0 $ and $ R_{0}<1 $ if and only if $ H(0)<0 $. Now, we shall derive the relation between $ G(0) $ and $ H'(I). $ Differentiating $ H(I) $ with respect to $ I $, we get
$ H'(I) \equiv \frac{rbu_{2}^{2}}{\beta (1+bu_{2}I)^{2}}-\frac{A\lbrace\beta+\alpha (d+u_{1})\rbrace}{\lbrace\beta I+(1+\alpha I)(d+u_{1})\rbrace^{2}}. $\\
Now, 
\begin{center}
$ G(0)$=$\begin{vmatrix} -\frac{\beta I}{1+\alpha I}-d-u_{1} & -\frac{\beta S}{(1+\alpha I)^2} \\
 \frac{\beta I}{1+\alpha I} & \frac{rbu_{2}^{2}I}{(1+bu_{2} I)^2}-\frac{\alpha \beta SI}{(1+\alpha I)^2} \end{vmatrix} $ \\ \
= $\begin{vmatrix} -\frac{\beta I}{1+\alpha I}-d-u_{1} & -\frac{\beta S}{(1+\alpha I)^2} \\
-d-u_{1} & \frac{rbu_{2}^{2}I}{(1+bu_{2} I)^2}-\frac{\beta S}{1+\alpha I} \end{vmatrix} $ \\ \
= $(-\frac{A}{S})\begin{vmatrix} 1 & \frac{\beta A}{\lbrace\beta I+(d+u_{1})(1+\alpha I)\rbrace^{2}}\\
 -(d+u_{1}) & \frac{rbu_{2}^{2}I}{(1+bu_{2} I)^2}-\frac{\beta S}{1+\alpha I} \end{vmatrix} $ \\ \
=$(-\frac{A}{S})\begin{vmatrix} 1 & \frac{\beta A}{\lbrace\beta I+(d+u_{1})(1+\alpha I)\rbrace^{2}}\\
 -(d+u_{1}) & \frac{rbu_{2}^{2}I}{(1+bu_{2} I)^2}-\frac{\beta A}{\beta I+(d+u_{1})(1+\alpha I)} \end{vmatrix}$  =$(-\frac{\beta AI}{S})H'(I)$. 
\end{center}
So, we have $ G(0)>0 $ if and only if $ H'(I)<0 $ and 
$ G(0)<0 $ if and only if $ H'(I)>0. $\\
Now, we shall discuss two cases.\\
\textbf{Case I}: Suppose $ R_{0}>1 $. Then $ H(0)>0 $. We have already proved in Section 3 that when $ R_{0}>1 $ then only one endemic equilibrium point $ (S^{*},I^{*},R^{*}) $ exists. Since $ H(0)>0 $, hence $ H(I) $ should decrease in some neighbourhood of $ I^{*} $. Thus, in this case $ H'(I^{*})<0 $ and so $ G(0)>0 $. Again, we know that one of the eigen values of the variational matrix is $ -d $ and the remaining eigen values are the solutions of the equation $ G(\lambda)=0 $ i.e. the equation $ \lambda^{2}+K_{1} \lambda+K_{2}=0, $ where $ K_{1}=2d+\delta+\gamma+u_{1}+\frac{ru_{2}}{(1+bu_{2}I)^{2}}+\frac{\beta I}{1+\alpha I}-\frac{\beta S}{(1+\alpha I)^{2}} $ and $ K_{2}=G(0). $ By using the relation $ S=\frac{ru_{2}(1+\alpha I)}{\beta (1+bu_{2}I)}+\frac{(d+\delta+\gamma)(1+\alpha I)}{\beta} $, $ K_{1} $ is simplified as
\[
K_{1}=d+u_{1}+\frac{\alpha \beta SI}{(1+\alpha I)^{2}}+\frac{\beta I}{1+\alpha I}-\frac{rbu_{2}^{2} I}{(1+bu_{2} I)^{2}} \geq d+u_{1}+\frac{\alpha \beta SI}{(1+\alpha I)^{2}}+\frac{\beta I}{1+\alpha I}-\frac{rbu_{2}^{2} I}{1+bu_{2} I} \]
\[= d+u_{1}+\frac{\alpha \beta SI}{(1+\alpha I)^{2}}+\frac{I}{(1+\alpha I)(1+bu_{2}I)} \lbrace (\beta -rbu_{2}^{2})+(\beta bu_{2}-\alpha rbu_{2}^{2})I \rbrace .
\]
Thus, $ K_{1} $ is positive if the condition $ \beta \geq max\lbrace rbu_{2}^{2}, r \alpha u_{2} \rbrace $ holds and $ K_{2}=G(0)> 0 $. Hence, all the eigen values of the variational matrix have negative real part. Therefore, $ (S^{*},I^{*},R^{*}) $ is asymptotically stable if $ \beta \geq max\lbrace rbu_{2}^{2}, r \alpha u_{2} \rbrace $. \\
\textbf{Case II}: Suppose $ R_{0}^{*}<R_{0}<1 $. Then $ H(0)<0.$ Again, we have already proved that two endemic equilibria $ (S_{1}^{*},I_{1}^{*},R_{1}^{*}) $ and $ (S_{2}^{*},I_{2}^{*},R_{2}^{*}) $ (with $ I_{1}^{*}<I_{2}^{*} $)  exist for  $  R_{0}^{*}<R_{0}<1 $ if  the condition $ (ru_{2}+d+\delta+\gamma)(ru_{2}+d+\delta+\gamma+\alpha A)<bru_{2}^{2}A $ holds. So, the function $ H(I) $ must increase in some neighbourhood of $ I_{1}^{*} $ and decrease in some neighbourhood of $ I_{2}^{*} $. Therefore, $ H'(I_{1}^{*})>0 $  and $ H'(I_{2}^{*})<0 .$ In this case, we have reached following two conclusions. \\
(1) For the equilibrium point $ (S_{1}^{*},I_{1}^{*},R_{1}^{*}) $, we have $ H'(I_{1}^{*})>0 $. So, $ G(0)<0. $ Again, $\lim G(\lambda)=\infty $ as $ \lambda \rightarrow \infty.$ Thus, $ G(\lambda_{i})=0 $ for some $ \lambda_{i}>0 .$ So, at least one eigen value of the variational matrix is positive. Therefore, $ (S_{1}^{*},I_{1}^{*},R_{1}^{*}) $ is unstable.\\
(2) For the equilibrium point $ (S_{2}^{*},I_{2}^{*},R_{2}^{*}) $, we have $ H'(I_{2}^{*})<0 $ and so $ G(0)>0. $ Thus, we proceed same as case I and derive that $ (S_{2}^{*},I_{2}^{*},R_{2}^{*}) $ is asymptotically stable if $ \beta \geq max\lbrace rbu_{2}^{2}, r \alpha u_{2} \rbrace $.
\\Hence the theorem is proved.
\end{proof}
In Figure 1, we have plotted backward bifurcation curve where blue and red lines represent the lines of stable and unstable equilibrium points respectively. Therefore, Theorem 6 is justified by Figure 1.
\section{Characterization of the Optimal Control}
In this model, we have considered two controls, one control variable $u_{1}$ is used for vaccinating the susceptible populations and other control variable $u_{2}$ is used for treatment efforts for infected individuals. We assume that both vaccination and treatment controls are the functions of time $ t $ as they are applied according to the necessity. Our main objective is to minimize the total loss occurs due to the presence of infection and the cost due to vaccination of susceptible individuals and treatment of infected individuals. Thus, the strategy of the optimal control is to minimize the susceptible and infected individuals as well as the cost of implementing the two controls. Thus, we construct the objective functional to be minimized as follows :
\begin{center}
$ J(u_{1},u_{2})=\int_{0}^{T}(A_{1}S+A_{2}I+B_{1}u_{1}^{2}+B_{2}u_{2}^{2})dt $	
\end{center}
where the constants $ A_{1} $ and $ A_{2} $ are respectively the per capita loss due to presence of susceptible and infected population at any time instant. Also, the constants $ B_{1} $ and $ B_{2} $ respectively represent the costs associated with vaccination of susceptible and treatment of infected individuals. We also assume that the time interval is $ [0,T] $. The problem is to find optimal functions $ (u_{1}^{*}(t),u_{2}^{*}(t)) $ such that 
$ J(u_{1}^{*},u_{2}^{*})=min \lbrace J(u_{1},u_{2}),(u_{1},u_{2})\in U\rbrace, $ where the control set is defined as $ U=\lbrace(u_{1},u_{2})/u_{i}(t)$ is Lebesgue measurable on $ [0,1],0\leq u_{1}(t),u_{2}(t)\leq 1,t\in [0,T]\rbrace $.
\begin{theorem}
There are optimal controls $ u_{1}^{*} $ and $ u_{2}^{*} $ such that $ J(u_{1}^{*},u_{2}^{*})=min\big\{ J(u_{1},u_{2}),(u_{1},u_{2})\in U\big\}$.
\end{theorem}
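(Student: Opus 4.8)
The plan is to establish existence by verifying the hypotheses of a standard existence theorem for optimal controls (of Fleming--Rishel / Cesari type), which reduces the problem to checking a short list of structural conditions on the state system and the running cost. Writing $L(S,I,u_1,u_2)=A_1S+A_2I+B_1u_1^2+B_2u_2^2$ for the integrand and letting $f$ denote the right-hand side of (1), I would check: (i) the set of admissible control pairs together with their corresponding state trajectories is non-empty; (ii) the control set $U$ is closed and convex; (iii) each component of $f$ is bounded by a linear function of the state and control variables; (iv) $L$ is convex in $(u_1,u_2)$; and (v) there exist constants $c_1>0$, $c_2\geq 0$ and $\rho>1$ with $L(S,I,u_1,u_2)\geq c_1(u_1^2+u_2^2)^{\rho/2}-c_2$.

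First I would dispose of the routine conditions. Condition (ii) is immediate, since $U=\{(u_1,u_2):0\leq u_1,u_2\leq 1\}$ is a closed bounded rectangle, hence convex. For (i), I would invoke Lemma 1: for any measurable control pair in $U$ the right-hand side of (1) is continuous and, on the positively invariant region $D$, uniformly bounded and Lipschitz in the state, so classical ODE existence theory (Picard--Lindel\"of / Carath\'eodory) produces a bounded trajectory and the admissible set is non-empty. For (iii), I would again use that $S,I,R$ remain in $D$, so $S+I+R\leq A/d$; since $u_1S$ is bounded by $A/d$ and the saturated terms $\frac{\beta SI}{1+\alpha I}$ and $\frac{ru_2I}{1+bu_2I}$ are dominated by $\beta S$ and $rI$ respectively, each coordinate of $f$ is controlled by a linear expression in the states and controls.

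The substantive part is conditions (iv) and (v), both of which rest on the quadratic appearance of the controls in the cost. For (iv), the Hessian of $L$ in $(u_1,u_2)$ is the constant diagonal matrix $\mathrm{diag}(2B_1,2B_2)$, positive semi-definite whenever $B_1,B_2\geq 0$, so $L$ is convex in the controls. For (v), since $S,I\geq 0$ on $D$ the state-dependent part $A_1S+A_2I$ is non-negative, whence $L\geq B_1u_1^2+B_2u_2^2\geq \min\{B_1,B_2\}(u_1^2+u_2^2)$; taking $\rho=2$, $c_1=\min\{B_1,B_2\}>0$ and $c_2=0$ supplies the required coercivity bound. With all the hypotheses in hand, the existence theorem yields a minimizing pair $(u_1^*,u_2^*)\in U$, which is exactly the assertion.

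I expect the genuinely delicate point to be the interaction of conditions (i)/(iii) with the nonlinear dependence of the dynamics on $u_2$: because the treatment term $\frac{ru_2I}{1+bu_2I}$ is not affine in $u_2$, one cannot appeal to the simplest (control-affine) form of the theorem, and the a priori bound $S+I+R\leq A/d$ from Lemma 1 must be used carefully to tame both the incidence and treatment terms. Either I would verify directly that the extended velocity set $\{(f,\,L+\eta):(u_1,u_2)\in U,\ \eta\geq 0\}$ is convex, or I would invoke the Cesari version of the existence theorem that accommodates nonlinear control dependence. Everything else reduces to the elementary observations that the controls enter the cost quadratically and that the admissible region is a compact rectangle.
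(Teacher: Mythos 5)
Your proposal is correct and takes essentially the same approach as the paper, whose entire proof consists of noting that the integrand is convex in $(u_1,u_2)$ and that the bounded solutions of system (1) give the Lipschitz property, then concluding existence from the standard (Fleming--Rishel-type) theorem. Your fleshed-out checklist is in fact more careful than the paper's two-line argument---in particular the coercivity bound $L\geq \min\{B_1,B_2\}(u_1^2+u_2^2)$ and your caveat that the dynamics are not affine in $u_2$ because of the term $\frac{ru_2I}{1+bu_2I}$ (so one needs convexity of the extended velocity set or a Cesari-type theorem) address exactly the points the paper silently skips.
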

\begin{proof}: The integrand of the objective functional $ J(u_{1},u_{2}) $ is a convex function of $ u_{1} $ and $ u_{2} $. Since the solution of the system (1) is bounded, hence the system satisfies the Lipshitz property with respect to the variables $ S,I $ and $ R $. Therefore, there exists an optimal pair $ (u_{1}^{*},u_{2}^{*}) $ . \\
Hence the theorem is proved.
\end{proof}
The Lagrangian of the problem is given by $ L=A_{1}S+A_{2}I+B_{1}u_{1}^{2}+B_{2}u_{2}^{2} $. Now, we form the Hamiltonian $ H $ for the problem given by, 
\begin{center}
$ H(S,I,R,u_{1},u_{2},\lambda_{1},\lambda_{2},\lambda_{3})=A_{1}S+A_{2}I+B_{1}u_{1}^{2}+B_{2}u_{2}^{2}+\lambda_{1}(t)\lbrace A-\frac{\beta SI}{1+\alpha I}-dS-u_{1}S\rbrace+\lambda_{2}(t)\lbrace \frac{\beta SI}{1+\alpha I}-(d+\delta+\gamma)I-\frac{ru_{2}I}{1+bu_{2}I}\rbrace+\lambda_{3}(t)\lbrace \frac{ru_{2}I}{1+bu_{2}I}+\gamma I+u_{1}S-dR\rbrace $.
\end{center}
In order to determine the adjoint equations and transversality conditions, we use Pontryagin's Maximum Principle [15-16] which gives 
$ \frac{d\lambda_{1}(t)}{dt}=-\frac{\partial H}{\partial S} ,\frac{d\lambda_{2}(t)}{dt}=-\frac{\partial H}{\partial I}, \frac{d\lambda_{3}(t)}{dt}=-\frac{\partial H}{\partial R}, $ with the transversality conditions $ \lambda_{i}(T)=0,i=1,2,3. $ Thus, we have 
\begin{equation}\label{eq:9}      
\begin{cases}
\frac{d\lambda_{1}}{dt} &= -A_{1}+\frac{(\lambda_{1}-\lambda_{2})\beta I}{1+\alpha I}+d\lambda_{1}+u_{1}(\lambda_{1}-\lambda_{3})\\
\frac{d\lambda_{2}}{dt} &= -A_{2}+\frac{(\lambda_{1}-\lambda_{2})\beta S}{(1+\alpha I)^{2}}+\frac{(\lambda_{2}-\lambda_{3})ru_{2}}{(1+bu_{2}I)^{2}}+(d+\delta)\lambda_{2}+\gamma(\lambda_{2}-\lambda_{3})\\
\frac{d\lambda_{3}}{dt} &=d\lambda_{3}
\end{cases}
\end{equation}
with the transversality conditions 
\begin{equation}\label{eq:10}      
\lambda_{1}(T)=0, \lambda_{2}(T)=0, \lambda_{3}(T)=0.
\end{equation}
Now, using the optimality conditions $ \frac{\partial H}{\partial u_{1}}=0 $ and $ \frac{\partial H}{\partial u_{2}}=0 $ we get \\
$ u_{1}=\frac{(\lambda_{1}-\lambda_{3})S}{2B_{1}}$  and $ u_{2}(1+bu_{2}I)^2=\frac{(\lambda_{2}-\lambda_{3})rI}{2B_{2}} .$ Clearly, $ \frac{\partial ^{2} H}{\partial u_{1}^{2}}>0 , \frac{\partial ^{2} H}{\partial u_{2}^{2}}>0 $ and $ \frac{\partial ^{2} H}{\partial u_{1}^{2}} \frac{\partial ^{2} H}{\partial u_{2}^{2}}-(\frac{\partial ^{2} H}{\partial u_{1} \partial u_{2}})^{2}>0. $\\
Therefore, the optimal problem is minimum at controls $ u_{1}^{*} $ and $ u_{2}^{*} $ where 
$ u_{1}^{*} =max \big\{0,min\big\{ \frac{(\lambda_{1}^{*}-\lambda_{3}^{*})S^{*}}{2B_{1}},1\big\} \big\}$ and $ u_{2}^{*} =max \big\{0,min\big\{ \overline{u_{2}},1\big\} \big\}$, where $\overline{u_{2}}$ is the non-negative root of the equation $ u_{2}(1+bu_{2}I^{*})^2=\frac{(\lambda_{2}^{*}-\lambda_{3}^{*})rI^{*}}{2B_{2}} .$ Here, $ S^{*},I^{*},R^{*} $ are respectively the optimum values of $ S,I,R $ and $ (\lambda_{1}^{*},\lambda_{2}^{*},\lambda_{3}^{*}) $ is the solution of the system (9) with the condition (10). Thus, we summarize the details in the following:
\begin{theorem}
The optimal controls $ u_{1}^{*} $ and $ u_{2}^{*} $ which minimize $ J $ over the region $ U $ are given by \\
$ u_{1}^{*} =max \big\{0,min\big\{ \frac{(\lambda_{1}^{*}-\lambda_{3}^{*})S^{*}}{2B_{1}},1\big\} \big\}$ and $ u_{2}^{*} =max \big\{0,min\big\{ \overline{u_{2}},1\big\} \big\}$, where $\overline{u_{2}}$ is the non-negative root of the equation $ u_{2}(1+bu_{2}I^{*})^2=\frac{(\lambda_{2}^{*}-\lambda_{3}^{*})rI^{*}}{2B_{2}}$.
\end{theorem}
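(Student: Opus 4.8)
The plan is to apply Pontryagin's Maximum Principle, whose existence hypothesis is already secured by Theorem 8. First I would record the Hamiltonian $H$ displayed above and, from it, read off the adjoint system (9) through $\frac{d\lambda_i}{dt}=-\frac{\partial H}{\partial x_i}$ together with the transversality conditions (10); these follow by straightforward differentiation of $H$ in $S$, $I$, $R$ and are routine. The substance of the characterization lies in the pointwise minimization principle: along the optimal trajectory the control pair $(u_1^*,u_2^*)$ must minimize $H$ over the admissible box $U=[0,1]^2$ for almost every $t$.

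Second, I would locate the unconstrained stationary points by solving $\frac{\partial H}{\partial u_1}=0$ and $\frac{\partial H}{\partial u_2}=0$. Collecting the $u_1$-dependent part of $H$ as $B_1u_1^2+(\lambda_3-\lambda_1)u_1S$ gives $2B_1u_1-(\lambda_1-\lambda_3)S=0$, hence the interior value $u_1=\frac{(\lambda_1-\lambda_3)S}{2B_1}$. For $u_2$ I would first simplify $\frac{d}{du_2}\big(\frac{u_2}{1+bu_2I}\big)=(1+bu_2I)^{-2}$, so that $\frac{\partial H}{\partial u_2}=2B_2u_2-(\lambda_2-\lambda_3)rI(1+bu_2I)^{-2}=0$, which rearranges to the implicit relation $u_2(1+bu_2I)^2=\frac{(\lambda_2-\lambda_3)rI}{2B_2}$.

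Third, I would confirm these stationary points are genuine minimizers rather than maximizers or saddles. Since the integrand is quadratic in the controls and the two controls enter $H$ through separate terms, the Hessian $\frac{\partial^2 H}{\partial u_i\partial u_j}$ is diagonal; I would verify $\frac{\partial^2 H}{\partial u_1^2}=2B_1>0$ and $\frac{\partial^2 H}{\partial u_2^2}>0$, so that the vanishing off-diagonal entry forces a positive determinant $\frac{\partial^2 H}{\partial u_1^2}\frac{\partial^2 H}{\partial u_2^2}-\big(\frac{\partial^2 H}{\partial u_1\partial u_2}\big)^2>0$ and hence a strict minimum, exactly as recorded before the statement.

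Finally, because $U$ is the box $[0,1]^2$, the constrained minimizer is obtained by projecting each interior value onto $[0,1]$, yielding the clamped formulas $u_1^*=\max\{0,\min\{\frac{(\lambda_1^*-\lambda_3^*)S^*}{2B_1},1\}\}$ and $u_2^*=\max\{0,\min\{\overline{u_2},1\}\}$, evaluated along the optimal state–adjoint pair $(S^*,I^*,R^*,\lambda_1^*,\lambda_2^*,\lambda_3^*)$. I expect the main obstacle to be the $u_2$-equation: it has no closed-form solution because clearing the denominator turns it into the cubic $b^2I^2u_2^3+2bIu_2^2+u_2=\frac{(\lambda_2-\lambda_3)rI}{2B_2}$. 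I would handle this by noting that the left-hand side is strictly increasing in $u_2\ge 0$, sweeping from $0$ to $+\infty$, so whenever the right-hand side is non-negative there is a unique non-negative root $\overline{u_2}$; when the right-hand side is negative the projection simply forces $u_2^*=0$, so the clamped formula is well defined in every case. Establishing that this $\overline{u_2}$ is the actual minimizer of $H$ in $u_2$ on $[0,1]$ — using the monotonicity of $\frac{\partial H}{\partial u_2}$ to rule out spurious behaviour — is the delicate point of the argument.
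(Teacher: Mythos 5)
Your proposal is correct and follows essentially the same route as the paper: the paper likewise forms the Hamiltonian, invokes Pontryagin's Maximum Principle to obtain the adjoint system (9) and transversality conditions (10), solves the stationarity conditions $\frac{\partial H}{\partial u_{1}}=0$ and $\frac{\partial H}{\partial u_{2}}=0$ to get $u_{1}=\frac{(\lambda_{1}-\lambda_{3})S}{2B_{1}}$ and $u_{2}(1+bu_{2}I)^{2}=\frac{(\lambda_{2}-\lambda_{3})rI}{2B_{2}}$, checks the positivity of $\frac{\partial^{2}H}{\partial u_{1}^{2}}$, $\frac{\partial^{2}H}{\partial u_{2}^{2}}$ and of the Hessian determinant, and then clamps via $\max\{0,\min\{\cdot,1\}\}$. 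Your extra step verifying that $\overline{u_{2}}$ is well defined --- expanding the implicit relation into the cubic $b^{2}I^{2}u_{2}^{3}+2bIu_{2}^{2}+u_{2}=\frac{(\lambda_{2}-\lambda_{3})rI}{2B_{2}}$ whose left side is strictly increasing on $u_{2}\geq 0$, giving a unique non-negative root whenever the right side is non-negative --- is a sound refinement that the paper's proof leaves implicit.
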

\section{Numerical Simulations and Efficiency Analysis}
%\begin{center}
%	\begin{tabular}{|| m{4cm} || m{1cm}|| m{1cm}}
%		\hline
%		\hline
%		\begin{center}
%			Parameters
%		\end{center} & \begin{center}
%			Values
%		\end{center} \\
%		\hline
%		\hline
%		$ A $ & 100\\
%		\hline
%		$ \beta $ & 0.1 \\
%		\hline
%		$ \alpha $ & 0.5 \\
%		\hline
%	    $ d $ & 0.004   \\
%	    \hline
%	    $ \delta $ & 0.02\\
%		\hline
%		$ \gamma $ & 0.9  \\
%		\hline
%		$ r $ & 0.4\\
%		\hline
%		$ b $ & 0.21\\
%		\hline
%		$ A_{1} $ & 0.08\\
%		\hline
%		$ A_{2} $ & 0.04\\
%		\hline
%		$ B_{1} $ & 0.3\\
%		\hline
%		$ B_{2} $ & 0.3\\
%		\hline
%		\hline
%		\end{tabular}
%\end{center}
To justify the impact of optimal control, we have used the forward-backward sweep method to solve the optimality system numerically. This method combines the forward application of a fourth order Runge-Kutta method for the state system (1) with the backward application of a fourth order Runge-Kutta method for the adjoint system (9) and the transversality conditions (10). Here, we fixed up our problem for 20 months and assume that the vaccination and treatment are stopped after 20 months. The simulation which we carried out by using the parametric values given in Table 2 with the initial conditions $ S(0)=50, I(0)=4 $ and $ R(0)=0.01 $. \\
\begin{center}
	\begin{tabular}{| m{2cm} | m{0.8cm}| m{0.6cm}|  m{0.8cm}|  m{0.8cm}|  m{0.8cm}|  m{0.8cm}|  m{0.8cm}|  m{0.8cm}| m{0.8cm}| m{0.8cm}| m{0.8cm}| m{0.6cm}|}
		\hline
		
		\begin{center}
			Parameters
		\end{center} &
		$ A $ & $ \beta $ & $ \alpha $ &$ d $& $ \delta $&$ \gamma $&$ r $ &$ b $ &$ A_{1} $ &$ A_{2} $ &$ B_{1} $ &$ B_{2} $ \\
		\hline
		\begin{center}
			Values
		\end{center} 
		 & 100 & 0.1 & 0.5 & 0.004 & 0.02 & 0.7 & 0.4	& 0.05 & 0.01 & 0.08 & 0.8 & 0.1\\
%		\hline
%		$ A_{2} $ & 0.04\\
%		\hline
%		$ B_{1} $ & 0.3\\
%		\hline
%		$ B_{2} $ & 0.3\\
		
		\hline
		\end{tabular}
\end{center}
\begin{center}
Table 2
\end{center}
Figure 2(a)-(c) show the time series of the susceptible $ (S) $, infected $ (I) $ and recovered $ (R) $ individuals both with and without control. Figure 3(a)-(b) represent the optimal control $ u_{1}^{*} $ and $ u_{2}^{*} $ respectively for the time interval $ [0 , 20] $. From Figure 2(a)-(c), we see that optimal controls due to vaccination and treatment are very effective for reducing the number of susceptible and infected individuals and so enhancing the number of recovered individuals significantly.
\begin{figure}[H]
 \centering
  \subfloat[]{\includegraphics[width=2.5 in]{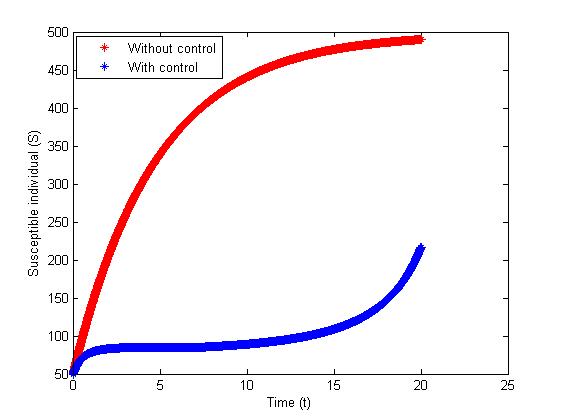}}
  \subfloat[]{\includegraphics[width=2.5 in]{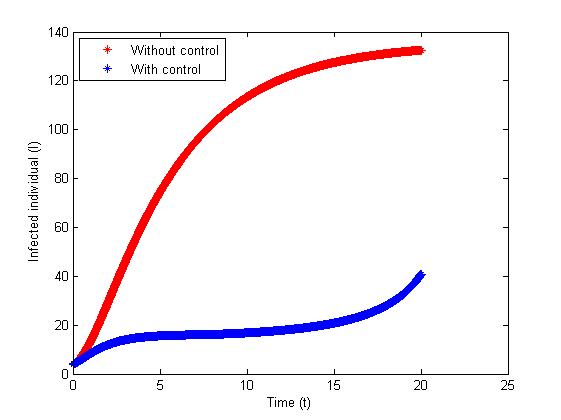}}
  \subfloat[]{\includegraphics[width=2.5 in]{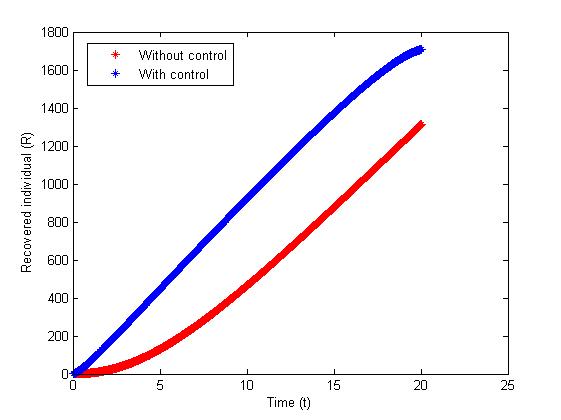}}
 \caption{Time series of the populations with control and without control:(a) susceptible individuals (b) infected individuals ,(c) recovered individuals .} 
\end{figure}
\begin{figure}[H]
 \centering
  \subfloat[]{\includegraphics[width=2.5 in]{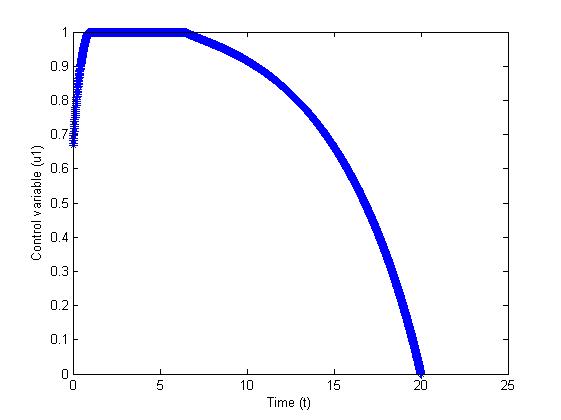}}
  \subfloat[]{\includegraphics[width=2.5 in]{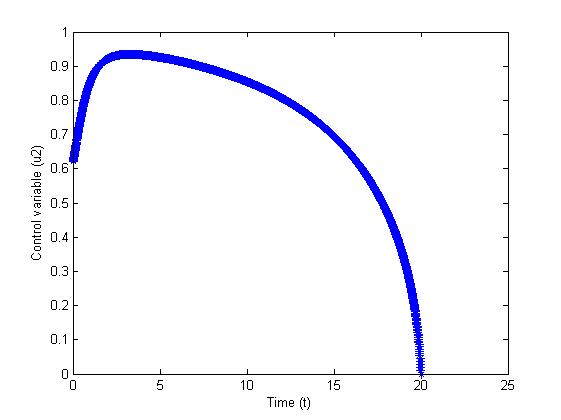}}
 \caption{Time series of control variables: (a) Optimal control $ u_{1} $, (b) Optimal control $ u_{2} $.}
\end{figure}
%\begin{figure}[H]
 %\centering
  %{\includegraphics[width=2.5 in]{optimalcont4}}
 %\caption{Optimal control $u_{1}$.} 
%\end{figure}
%\begin{figure}[H]
 %\centering
  %{\includegraphics[width=3 in]{optimalcont5}}
 %\caption{Optimal control $u_{2}$.} 
%\end{figure}
In this paper, we have considered two controls, one is vaccination control $ u_{1} $ and other is treatment control $ u_{2} $. But, if we use only one control among $ u_{1} $ and $ u_{2} $ then one question may arise `which control is more efficient to reduce infection ?'  To answer this question we will perform an efficiency analysis [17] which will allow us to determine the best control strategy. Here, we distinguish two control strategies STR-1 and STR-2 where STR-1 is the strategy where $ u_{1} \neq 0 $ , $ u_{2} = 0 $ and STR-2 is the strategy where $ u_{1} = 0 $ , $ u_{2} \neq 0 $. To determine the best control strategy among these two, we have to calculate the efficiency index (E.I.) = $ (1-\frac{\mathbb{A}^{c}}{\mathbb{A}^{o}})\times 100 $, where $ \mathbb{A}^{c} $ and $ \mathbb{A}^{o} $ are the cumulated number of infected individuals with and without control, respectively. The best strategy will be the one whom efficiency index will be bigger [17]. It can be noted that the cumulated number of infected individuals during the time interval $ [0 , 20] $ is defined by $ \mathbb{A} = \int_{0}^{20} I(t)dt $. We have used Simpson's $ \frac{1}{3} $ rule to evaluate the value of integration and we have $ \mathbb{A}^{0} = 1933.9 $. The values of $ \mathbb{A}^{c} $ and efficiency index (E.I.) for STR-1 and STR-2 are given in Table 3.
\begin{center}
	\begin{tabular}{| m{3cm} | m{3cm}| m{3cm} |}
		\hline
		%\hline
		\begin{center}
			Strategy
		\end{center} & \begin{center}
			$ \mathbb{A}^{c} $
		\end{center} & \begin{center}
			$ E.I. $
		\end{center}\\
		%\hline
		\hline
		\begin{center}
		$ STR-1 $ 
		\end{center}& \begin{center} $ 410.2195 $ 
		\end{center} & \begin{center} $ 78.79 $ \end{center}\\
		\hline
		\begin{center} $ STR-2 $ 
		\end{center}& \begin{center} $ 1787.7 $ \end{center} & \begin{center} $ 7.56 $ 
		\end{center}\\
		%\hline
		\hline
		\end{tabular}
\end{center}
\begin{center}
Table 3. Strategies and their efficiency index
\end{center}
From Table 3, it follows that STR-1 is the best strategy among STR-1 and STR-2 which permits to reduce the number of incident cases. Thus, vaccination is more effective than treatment.
\section{Conclusions}
In this paper, we have analysed the qualitative behaviour and optimal control strategy of an SIR model. We have introduced a saturated incidence rate which is affected by inhibitory factors and  considered a saturated treatment function which characterizes the effect of limited treatment capacity on the spread of infection. Two control functions have been used, one for vaccinating the susceptible populations and other for controlling the treatment efforts to the infected populations. To describe the complex dynamics of the solutions for constant controls, we have obtained the basic reproduction number $ R_{0} $ which plays a crucial role for the study of stability analysis of both disease free equilibrium point and endemic equilibrium points as well as backward bifurcation analysis. We have established that DFE is locally asymptotically stable for $ R_{0} < 1 $ and in addition, if inhibitory coefficient is greater than some quantity $ ( \alpha \geq bu_{2} ) $ then DFE is globally asymptotically stable which is very significant at the biological point of view. If $ R_{0} = 1 $, DFE is a non-hyperbolic equilibrium point and the stability analysis of this point has been investigated by using Centre manifold theory. We have also used Sotomayor theorem to show transcritical bifurcation at DFE with respect to the treatment control. We have obtained a necessary and sufficient condition on the model parameters such that backward bifurcation occurs. Moreover, stability analysis of endemic equilibrium points is discussed analytically for the different values of $ R_{0} $.\\
We have also studied and determined the optimal vaccination and treatment to minimize the number of infective and susceptible populations as well as the cost due to vaccination and treatment. A comparative study between the system with controls and without control has been presented to realize the positive impact of vaccination and treatment in controlling the infectious diseases. Finally, efficiency analysis has been performed to determine that the vaccinating to the susceptible populations is better than treatment control to infected populations in order to minimize the infected individuals. The entire study of this paper is mainly based on the deterministic framework and our proposed model is valid for large population only. The work is a theoretical modelling and it can be further justified using experimental results.

\end{document}